\newtheorem{Theorem}{Theorem}[section]
\newtheorem{Lemma}[Theorem]{Lemma}
\newtheorem{Proposition}[Theorem]{Proposition}
\newtheorem{Definition}[Theorem]{Definition}
\newtheorem{Remark}[Theorem]{Remark}
\def\V{\mbox{Var}}
\def\Z{{\mathbb Z}}
\def\R\re
\def\V{\bf V}
\def \la{\lambda}
\def \re{{\mathbb R}}
\def \C{{\mathbb C}}
\def \M{{\widetilde M}}
\def \I{{\mathbb I}}
\def \H{{\mathbb H}_{\bf C}}
\def \0{\lambda_{0}}
\def \la{\lambda}
\def \ga{\gamma}
\def \G{{\mathbf G}_{\mu}}
\def\P{{\mathbb P}}
\newcommand{\de}[2]{\frac{\partial #1}{\partial #2}}
\newcommand{\del}[2]{\frac{\delta #1}{\delta #2}}
\begin{document}

\title[]{On the cohomological equation of magnetic flows}

\author[N.S. Dairbekov]{Nurlan S. Dairbekov}
\address{Kazakh British Technical University,
Tole bi 59, 050000 Almaty, Kazakhstan }
\email{Nurlan.Dairbekov@gmail.com}

\author[G.P. Paternain]{Gabriel P. Paternain}
 \address{ Department of Pure Mathematics and Mathematical Statistics,
University of Cambridge,
Cambridge CB3 0WB, England}
 \email {g.p.paternain@dpmms.cam.ac.uk}

\begin{abstract} We consider a magnetic flow without conjugate points on a closed manifold $M$ with generating
vector field $\G$. Let $h\in C^{\infty}(M)$ and let $\theta$ be a smooth 1-form on $M$.
We show that the cohomological equation
\[\G(u)=h\circ \pi+\theta\]
has a solution $u\in C^{\infty}(SM)$ only if $h=0$ and $\theta$
is closed. This result was proved in \cite{DP2} under the assumption that the flow of $\G$ is Anosov.
\end{abstract}



\maketitle

\section{Introduction} In the present paper, which is a sequel to \cite{DP2},
 we study the cohomological equation for a certain
class of second order differential equations on the tangent bundle of a closed connected manifold
$M$ with canonical projection $\pi:TM\to M$ ($\pi(x,y)=x$ for $x\in M$, $y\in T_xM$). 

The dynamical systems that we will consider are magnetic flows. In their most general form they are
determined by a pair $(F,\Omega)$, where $F$ is a Finsler metric on $M$ and $\Omega$ is a closed 2-form.
The Legendre transform $\ell_{F}:TM\setminus\{0\}\to T^*M\setminus\{0\}$
associated with the Lagrangian $\frac{1}{2}F^2$ is a diffeomorphism and
$\omega_{0}:=\ell_{F}^{*}(-d\la)$ defines a symplectic form on $TM\setminus\{0\}$, where $\la$
is the Liouville 1-form on $T^*M$. 
The {\it magnetic flow} of the pair $(F,\Omega)$ is the Hamiltonian flow $\phi$ of
$\frac{1}{2}F^2$ with respect to the symplectic form $\omega_{0}+\pi^*\Omega$.

Contrary to what happens for geodesic flows, a magnetic flow can change behaviour drastically as we change
energy levels. One can obtain the behaviour in all energy levels by restricting the flow
$\phi$ to the unit sphere bundle $SM:=F^{-1}(1)$ and changing $\Omega$ by $\la\,\Omega$, where $\la\in\re$.
Henceforth, we shall only consider $\phi$ acting on $SM$ and we will denote by $\G$ the infinitesimal generator
of $\phi$.
A curve $\ga:\re\to M$ given by $\ga(t)=\pi(\phi_{t}(x,y))$ will be called a {\it magnetic geodesic}.

The {\it cohomological equation} (also known in the literature as the {\it kinetic equation}) is simply
\[\G(u)=v\]
where $u,v$ are functions on $SM$. The function $v$ is said to be a {\it coboundary}.
The importance of the cohomological equation in dynamical systems is well known; it arises for example
in the study of invariant measures, conjugacy problems, reparametrizations,
rigidity questions and inverse problems.

Suppose now that $v\in C^{\infty}(SM)$ and we try to solve $\G(u)=v$ for $u\in C^{\infty}(SM)$. 
There are some obvious obstructions for doing so. Let $\mathcal I(\phi)$ be the space of all invariant distributions,
i.e., $\mathcal D\in\mathcal I(\phi)$ is an element of the dual space of $C^{\infty}(SM)$ such that
$\mathcal D(\G(u))=0$ for every smooth function $u$. Hence, if $\G(u)=v$ admits a smooth solution
$u$ we must have $\mathcal D(v)=0$ for all $\mathcal D\in \mathcal I(\phi)$. In particular,
\[\int_{SM}v\,d\mu=0\]
for every $\phi$-invariant Borel probability measure $\mu$.
Remarkably, in some cases invariant distributions are the only obstructions.
For Anosov flows, the smooth version of the Liv\v sic theorem
(\cite{LMM}) says that there exists $u\in C^{\infty}(SM)$ such that $\G(u)=v$ where $v\in C^{\infty}(SM)$
if and only if $v$ has zero integral along every closed orbit of the flow. 
More recently, L. Flaminio and G. Forni \cite{FF} proved the following:

\begin{Theorem} Let $M$ be a closed oriented hyperbolic surface
and let $\Omega$ be the area form. If $\mathcal D(v)=0$ for all $\mathcal D\in\mathcal I(\phi)$
then there exists $u\in C^{\infty}(SM)$ such that
\[\G(u)=v.\]\label{flafor}
\end{Theorem}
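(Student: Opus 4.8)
To prove Theorem~\ref{flafor} the plan is to use the homogeneous structure of the phase space. Since $M$ is a closed oriented hyperbolic surface, write $M=\Gamma\backslash\mathbb{H}$ with $\Gamma\subset PSL(2,\re)$ a cocompact lattice, so that $SM$ is identified with the homogeneous space $\Gamma\backslash PSL(2,\re)$. Because $\Omega$ is the area form, the magnetic geodesics are exactly the curves of constant geodesic curvature $1$, i.e. the horocycles of $\mathbb{H}$; hence, after rescaling $\G$ by a constant, $\phi$ is smoothly conjugate to the horocycle flow, and $\G$ corresponds to the image in $\Gamma\backslash PSL(2,\re)$ of a fixed nilpotent element $U$ generating a unipotent one-parameter subgroup. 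The cohomological equation $\G(u)=v$ thus becomes $Uu=v$.

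The first step is to decompose $L^2(\Gamma\backslash PSL(2,\re))$ into irreducible unitary representations of $PSL(2,\re)$. Cocompactness of $\Gamma$ yields a discrete decomposition $L^2=\bigoplus_j H_j$ with finite multiplicities, the pieces being one copy of the trivial representation (the constants), finitely many representations in the complementary series, and infinitely many in the principal and discrete series, organized by the eigenvalue $\mu_j$ of the Casimir operator. The smooth vectors $C^{\infty}(SM)$ are exactly the elements whose components decay rapidly in $\mu_j$, equivalently the intersection of the Sobolev spaces $W^{s}$ attached to the elliptic operator $-(X^{2}+U^{2}+V^{2})$ on $PSL(2,\re)$. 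So it suffices to solve and estimate the model equation $Uf=g$ inside a single irreducible $H_j$, and then sum over $j$.

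The heart of the matter is the model equation. Realizing each irreducible representation concretely --- e.g. in a line model where $U$ acts as $d/dx$, or through the $K$-weight decomposition and the raising/lowering operators --- one must (i) classify \emph{all} $U$-invariant distributions, obtaining exactly two of them in each principal or complementary series component, one in each discrete series component, together with the invariant volume coming from the trivial representation; and (ii) prove a tame a priori estimate for the corresponding Green operator, of the form $\|f\|_{-s}\le C\|g\|_{s'}$ whenever $g$ is annihilated by the invariant distributions attached to $H_j$, with a Sobolev loss $s+s'$ bounded by a fixed constant slightly larger than $1$, uniformly over the principal and discrete series and degrading only in the finitely many complementary series components. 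Granting (i)--(ii), one sets $u=\sum_j f_j$; the rapid decay of the components of $v$ and the uniform derivative loss force convergence in every $W^{s}$, so $u\in C^{\infty}(SM)$ and $\G(u)=v$ by construction, while the hypothesis $\mathcal D(v)=0$ for all $\mathcal D\in\mathcal I(\phi)$ is precisely what makes the obstruction in each $H_j$ vanish.

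The main obstacle is items (i) and (ii): identifying every $U$-invariant distribution in each irreducible representation, and proving the Green operator estimate with a loss of regularity uniform enough to be summable. The delicate regimes are the bottom of the Casimir spectrum and the complementary series, where the invariant distributions are the least regular and the inversion of $U$ behaves worst; here it is essential that $\Gamma$ is cocompact, so that only finitely many complementary series representations appear and the Casimir spectrum stays away from the dangerous thresholds. A secondary technical point is the passage from the per-representation solutions to a genuinely smooth solution on $SM$, i.e. controlling the convergence of $\sum_j f_j$ in the $C^{\infty}$ topology.
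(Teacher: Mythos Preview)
Your proposal is correct and follows the same route as the paper: conjugate the magnetic flow to the classical horocycle flow (the paper does this via $(x,y)\mapsto(x,iy)$, with no rescaling needed) and then invoke Flaminio--Forni's result for $U$. The paper does not reproduce the representation-theoretic argument at all---it simply cites \cite{FF}---whereas you go further and sketch that argument; your outline of the irreducible decomposition, the count of $U$-invariant distributions in each series, and the tame Sobolev estimates for the Green operator is an accurate summary of what \cite{FF} actually proves.
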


\medskip

Flaminio and Forni proved this theorem for the usual {\it horocycle flow} $\phi^h$, but
it is easy to see that $\phi$ and $\phi^h$ are conjugated by the map
$(x,y)\mapsto (x,iy)$, where $iy$ is the vector obtained by rotating $y$ by $\pi/2$
following the orientation of the surface.
Using representation theory they were able to determine completely
the space $\mathcal I(\phi^h)$ in terms of the spectrum of the Laplacian of $M$ and
the genus of $M$. We will return to their results below.

In the present paper we are concerned with the following geometrical aspect of the
cohomological equation related to the fibration $SM\to M$.
Assume there exists $u,v\in C^{\infty}(SM)$ such that $\G(u)=v$ and suppose that
for every $x\in M$, $v(x,y)$ is a polynomial of degree $k$ in $y$. What does it imply about $u$?
Must $u$ also be a polynomial in $y$? Of special interest to us is the case $k=1$,
i.e., we suppose that $v=h\circ\pi+\theta$, where $h\in C^{\infty}(M)$ and
$\theta$ is a smooth 1-form which we view as a function $\theta:TM\to\re$.
The case $k=2$ is also of great interest, but with the exception of Proposition \ref{higher}
we will not consider it here.
Even for $k=1$ cohomological equations of this special kind
appear in problems related to spectral rigidity and regularity of the Anosov splitting
\cite{DP1,DP2}.
Note that if $h=0$ and $\theta=df$ is exact, then $\G(f\circ\pi)=\theta$
because $d\pi(\G(x,y))=y$.

The literature on this topic is abundant, see \cite{GK1,GK2,min,CS,DS,P,DP1,DP2}.
However in all these references the results require either non-positive
curvature (and transitivity) or uniform hyperbolicity of $\phi$.
Here we show that these conditions can be weakened to just no conjugate points.

\bigskip

\noindent {\bf Theorem A.}  {\it Suppose $SM$ does not have conjugate points.
Let $h\in C^{\infty}(M)$ and let $\theta$ be a smooth 1-form on $M$.
Then the cohomological equation
\[\G(u)=h\circ \pi+\theta\]
has a solution $u\in C^{\infty}(SM)$ only if $h=0$ and $\theta$
is closed.}

\bigskip

Given $(x,y)\in TM$ we define {\it the vertical subspace
at} $(x,y)$ as ${\mathcal V}(x,y):=\mbox{\rm ker}\,d_{(x,y)}\pi$, where $\pi:TM\to M$
is the canonical projection. 
We say that the orbit of $(x,y)\in SM$ {\it does not have conjugate
points} if for all $t\neq 0$,
$$d_{(x,y)}\phi_{t}({\mathcal V}(x,y))\cap {\mathcal V}(\phi_{t}(x,y))=\{0\}.$$
The energy level $SM$ is said to have no conjugate points if
for all $(x,y)\in SM$, the orbit of $(x,y)$ does not have conjugate points.
Since magnetic flows are optical, the main result in \cite{CGIP} says that
$SM$ has no conjugate points if and only if the asymptotic Maslov index 
of the Liouville measure ${\mathfrak m}(\mu)=0$.

Note that if $\phi$ is Anosov, then there are no conjugate points \cite{PP,P3,CGIP}
and Theorem A was proved in \cite{DP2} using a non-negative
version of the Liv\v sic theorem \cite{LT,PS} and an integral version of the Pestov identity.
Here we will use the same integral Pestov identity and show how we can
do without hyperbolicity.

Theorem A seems to be new even for the particular case
of the geodesic flow of a Riemannian metric ($\Omega=0$ and $F$ is Riemannian).
There are several interesting examples of geodesic flows without conjugate which
are not Anosov and have regions of positive curvature.
The example of W. Ballmann, M. Brin and K. Burns in \cite{BBB}
is of this kind and has the additional feature of having non-continuous 
Green subbundles (see Section 2). Another interesting class of magnetic flows
without conjugate points is given by compact quotients $M$ of complex hyperbolic space
$\H^n$. If we let $(g,\Omega)$ be the K\"ahler structure with holomorphic sectional curvature
$-1$ on $M$, then the magnetic flow $\phi$ of the pair $(g,\Omega)$ has no conjugate points
and $\phi$ is an algebraic unipotent flow. For $n=1$ we obtain the flow in
Theorem \ref{flafor}. The proof that these magnetic flows have no conjugate points
is fairly simple and is given in the appendix, where we also collect other facts
about them which are probably well known to experts, but not readily available
in the literature.

It is very likely that in Theorem A one can replace ``closed" by ``exact", but we do not
know how to prove this in general. Let us explain what are the complications and at the same
time indicate very general conditions under which we can claim that $\theta$ must be exact.

Let $\mathcal M(\phi)$ be the space of all $\phi$-invariant Borel probability measures.
Clearly $\mathcal M(\phi)\subset \mathcal I(\phi)$. To any element 
$\mathcal D\in\mathcal I(\phi)$ we can associate its {\it asymptotic cycle}
$\rho(\mathcal D)\in H_1(M,\re)$ by setting
\[\langle \rho(\mathcal D),[\omega]\rangle=\mathcal D(\omega),\]
where $[\omega]\in H^1(M,\re)$ and $\omega$ is any closed 1-form in the class
$[\omega]$. Invariance of $\mathcal D$ ensures that $\rho(\mathcal D)$ is well
defined.

Suppose now that $\rho: \mathcal I(\phi)\to H_1(M,\re)$ is surjective. Then it
is immediate to see that if $\G(u)=\theta$ with $\theta$ closed, then $\theta$
must in fact be exact. We know of no example of a magnetic flow without conjugate
points for which $\rho$ is not surjective. We indicate now several conditions that
imply the surjectivity of $\rho$.

If $\Omega=0$, then $\rho$ is surjective. This follows from the fact
that $\rho(\mathcal M(\phi))$ is a compact convex set containing the origin
in its interior, which in turn follows from the fact that every non-trivial homotopy
class contains a closed geodesic, see \cite[Chapter 1]{P1}.

If $\phi$ is Anosov, the closed orbits are dense and it is not hard to see
that $\rho$ is also surjective (cf. \cite{Pla}).

But one can say more. Let $\M$ be the universal covering
of $M$. Consider the exponential map $\exp_{x}:T_{x}\M\to\M$ of the energy level $S\M$, given by
$\exp_{x}(ty)=\pi\circ\phi_{t}(x,y)$, where $x\in\M$, $t\geq 0$ and $y\in S_{x}\M$.
It is unknown if the abscence of conjugate points in $SM$ implies that $\exp_x$
is a diffeomorphism for all $x$ (see \cite[p. 907]{CI}). But suppose it does and to simplify
matters suppose also that $F$ is a Riemannian metric.
Then, $\M$ is diffeomorphic to $\re^n$ and the lift $\widetilde\Omega$ of $\Omega$ to $\M$ is exact.
Write $\widetilde\Omega=d\vartheta$.
Then we can associate to the magnetic system a critical value $c$ in the sense of Ma\~n\'e \cite{BP,Plast}:
\[c= \inf_{f\in C^{\infty}(\M,\re)}\;\sup_{x\in \M}\;
   \frac{1}{2}|d_{x}f+\vartheta_{x}|^{2}.\]
(Note that as $f$ ranges over $C^{\infty}(\M,\re)$ the form $\vartheta+df$ ranges over all
primitives of $\widetilde\Omega$, because any two primitives differ by a closed 1-form
which must be exact since $\M$ is simply connected.)

We will say that a magnetic flow is {\it Ma\~n\'e critical} if $c=1/2$.
If $\exp_{q}$ is a diffeomorphism, the same proof of Theorem D in \cite{CIPP}
shows that $c\leq 1/2$. If $c<1/2$, then assuming a technical condition on $\pi_1(M)$ which seems
to hold always, it is known that every non-trivial homotopy class contains a closed magnetic geodesic
\cite{Plast} and again $\rho$ is surjective.

We are left with the question: suppose $SM$ has no conjugate points and is Ma\~n\'e critical,
is $\rho$ surjective?  We mentioned before that compact quotients of complex hyperbolic space
$\H^n$ with the K\"ahler structure with holomorphic sectional curvature
$-1$ have no conjugate points. It turns out that they are also Ma\~n\'e critical
(for $n=1$ this is proved in \cite[Example  6.2]{Co} and for $n\geq 2$ the proof is similar). 
We do not know other examples of Ma\~n\'e critical magnetic flows without conjugate points.

For $n=1$ we will check using Flaminio and Forni's explicit computation of the invariant distributions
that $\rho$ is surjective (see Section \ref{dt}). 
We suspect that the same is true for any $n\geq 2$, but finding the invariant
distributions and understanding the cohomological equation using representation theory and harmonic analysis
seems quite a laborious task. A direct argument using the Pestov integral identity obtained
in \cite{DP2} will allow us to show exactness in this case. More precisely we will show:

\medskip

\noindent {\bf Theorem B.} {\it Let $M$ be a smooth compact quotient of complex hyperbolic space
$\H^n$. Let $(g,\Omega)$ be the K\"ahler structure with holomorphic sectional curvature
$-1$ and let $\G$ be the vector field generated by the magnetic flow of the pair $(g,\Omega)$.
Let $h\in C^{\infty}(M)$ and let $\theta$ be a smooth 1-form on $M$.
Then the cohomological equation
\[\G(u)=h\circ \pi+\theta\]
has a solution $u\in C^{\infty}(SM)$ if and only if $h=0$ and $\theta$
is exact.

}

\medskip

We note that if Theorem \ref{flafor} extends to compact quotients of $\H^n$ for $n\geq 2$, then
Theorem B shows that $\rho$ must be surjective for $n\geq 2$.


\subsection{Convex Hamiltonians} Theorem A above can be used to prove a fairly general result
for an arbitrary {\it convex superlinear} Hamiltonian $H:T^*M\to\re$ ($M$ close and connected). 
The Hamiltonian is said to be convex if $\partial^2 H/\partial p^2$ is everywhere positive definite.
The Hamiltonian is superlinear if for all $x\in M$,
\[\lim_{|p|\to\infty}\frac{H(x,p)}{|p|}=+\infty.\]

Let $\tau:T^*M\to M$ be the canonical projection.
Given an arbitrary smooth closed 2-form $\Omega$ on $M$, we consider $T^*M$ endowed with the
symplectic structure $-d\lambda+\tau^*\Omega $
where $\la$ is the Liouville 1-form. Given a convex superlinear Hamiltonian $H:T^*M\to\re$
we let $X_{H}$ be the Hamiltonian vector field of $H$ with respect to $-d\lambda+\tau^*\Omega $.
We denote by $\phi$ the flow of $X_H$.
Let $c$ be a regular value of $H$ and set $\Sigma:=H^{-1}(c)$.

As before we say that the orbit of $(x,p)\in \Sigma$ {\it does not have conjugate
points} if for all $t\neq 0$,
$$d_{(x,p)}\phi_{t}({\mathcal V}(x,p))\cap {\mathcal V}(\phi_{t}(x,p))=\{0\},$$
where now ${\mathcal V}(x,p)=\mbox{\rm ker}\,d_{(x,p)}\tau$.
The energy level $\Sigma$ is said to have no conjugate points if
for all $(x,p)\in \Sigma$, the orbit of $(x,p)$ does not have conjugate points.

\bigskip

\noindent {\bf Theorem C.}  {\it Suppose $\Sigma$ does not have conjugate points
and let $\theta$ be a smooth 1-form on $M$.
Then the cohomological equation
\[X_{H}(u)=\tau^*\theta(X_{H})\]
has a solution $u\in C^{\infty}(\Sigma)$ only if $\theta$
is closed.}

\bigskip

\section{Preliminaries}

\subsection{Green subbundles} If $SM$ has no conjugate points, one can construct the so
called {\it Green subbundles} \cite[Proposition A]{CI} given by:
\[E(x,y):=\lim_{t\to +\infty}d\phi_{-t}({\mathcal V}(\phi_{t}(x,y))),\]
\[F(x,y):=\lim_{t\to +\infty}d\phi_{t}({\mathcal V}(\phi_{-t}(x,y))).\] 
These subbundles are Lagrangian, they never intersect the vertical subspace
and, crucial for us, they are contained in $T(SM)$. Moreover, they vary measurably
with $(x,y)$ and they contain the vector field $\G$.

Assume now that $SM$ has no conjugate points and let $E$ be one of Green subbundles.
Using the splitting $$T_{(x,y)}TM=\mathcal H(x,y)\oplus \mathcal V(x,y),$$
where $\mathcal H(x,y)$ is the horizontal subspace,
we can represent $E(x,y)$ as the graph of a linear map $S(x,y):T_{x}M\to T_{x}M$.
The correspondence $(x,y)\mapsto S(x,y)$ is measurable and $\|S\|\in L^{\infty}(SM)$
 \cite[Proposition 1.7]{CI}.

\subsection{Semibasic tensor fields}
For the reader's convenience we recall
various definitions and notations from \cite{DP2}.
Henceforth $M$ is a closed $n$-dimensional manifold and
$F$ is a Finsler metric on~$M$.

Let $\pi:TM\setminus\{0\}\to M$ be the natural projection, and let
$\beta^r_s M:=\pi^*\tau^r_s M$ denote the bundle of semibasic
tensors of degree $(r,s)$, where $\tau^r_s M$ is the bundle of
tensors of degree $(r,s)$ over $M$. Sections of the bundles
$\beta^r_sM$ are called semibasic tensor fields and the space of
all smooth sections is denoted by $C^{\infty}(\beta^r_sM)$. For
such a field $T$, the coordinate representation
$$
T=(T^{i_1\dots i_r}_{j_1\dots j_s})(x,y)
$$
holds in the domain of a standard local coordinate system $(x^i,y^i)$
on $TM\setminus\{0\}$
associated with a local coordinate system $(x^i)$ in $M$.
Under a change of a local coordinate system, the components of
a semibasic tensor field are transformed by  the same formula
as those of an ordinary tensor field on $M$.

Every ``ordinary'' tensor field on $M$ defines
a semibasic tensor field by the rule $T\mapsto T\circ\pi$, so that
the space of tensor fields on $M$ can be treated as embedded in the space
of semibasic tensor fields.

Let $(g_{ij})$ be the fundamental tensor,
$$
g_{ij}(x,y)=\frac12[F^2]_{y^iy^j}(x,y),
$$
and let $(g^{ij})$ be the contravariant fundamental tensor,
\begin{equation}\label{g-1}
g_{ik}g^{kj}=\delta_i^j.
\end{equation}

In the usual way, the fundamental tensor
defines the inner product $\langle\cdot,\cdot\rangle$
on $\beta^1_0 M$, and we put $|U|^2=\langle U,U\rangle$.

Let
$$
\mathbf G =y^i\de{}{x^i}-2G^i\de{}{y^i}
$$
be the spray induced by $F$. Here
$G^i$ are the geodesic coefficients \cite[(5.7)]{She},
$$
G^i(x,y)=\frac14g^{il}
\left\{2\de{g_{jl}}{x^k}-\de{g_{jk}}{x^l}\right\}y^jy^k.
$$

Let
\begin{equation*}
T(TM\setminus\{0\})
=\mathcal H TM\oplus \mathcal V TM
\end{equation*}
be the decomposition of $T(TM\setminus\{0\})$
into horizontal and vertical vectors. Here
$$
\mathcal H TM=\operatorname{span}\left\{\del{}{x^i}\right\},
\quad
\mathcal V TM =\operatorname{span}\left\{\de{}{y^i}\right\},
$$
with
$$
\del{}{x^i}=\de{}{x^i}-N^j_i\de{}{y^j}
$$
and
$$
N^i_j=\de{G^i}{y^j}.
$$

Let
$$
\nabla:C^{\infty}(T(TM))\times C^\infty(\pi^*TM)\to
C^{\infty}(\pi^*TM)
$$
be the Chern connection,
$$
\nabla_{\hat X}U=\left\{dU^i(\hat X)+U^j\omega_j^i(\hat X)\right\}\de{}{x^i},
$$
where
$$
\omega^i_j=\Gamma^i_{jk}dx^k
$$
are the connection forms. Recall that
\begin{equation}\label{nij}
N^i_j=\Gamma^i_{jk}y^k.
\end{equation}

Given a function $u\in C^\infty(TM\setminus\{0\})$, we put
$$
u_{|k}:=\del u{x^k},
\quad u_{\cdot k}:=\de u{y^k}
$$
and, given a semibasic vector field $U=(U^i)\in
C^\infty(\beta^1_0M)$, put
$$
U^i_{|k}:=\left(\nabla_{\del{}{x^k}} U\right)^i,
\quad
U^i_{\cdot k}:=\left(\nabla_{\de{}{y^k}} U\right)^i.
$$

We have
$$
u_{|k}=\de u{x^k}-\Gamma^p_{kq}y^q\de u{y^p},
\quad
u_{\cdot k}=\de u{y^k},
$$
and
$$
U^i_{|k}=\de {U^i}{x^k}-\Gamma^p_{kq}y^q\de {U^i}{y^p}
+\Gamma^i_{kp}U^p,
\quad
U^i_{\cdot k}=\de{U^i}{y^k}.
$$

In the usual way, we extend these formulas to higher order
tensors:
\begin{multline*}
T^{i_1\dots i_r}_{j_1\dots j_s|k}
=\de{}{x^k}T^{i_1\dots i_r}_{j_1\dots j_s}
-\Gamma^p_{kq}y^q\de{}{y^p}T^{i_1\dots i_r}_{j_1\dots j_s}
\\
+\sum_{m=1}^r\Gamma^{i_m}_{kp} T^{i_1\dots i_{m-1}pi_{m+1}\dots
i_r}_{j_1\dots j_s} -\sum_{m=1}^s\Gamma^{p}_{kj_m} T^{i_1\dots
i_r}_{j_1\dots j_{m-1}pj_{m+1}\dots j_s}
\end{multline*}
and
$$
T^{i_1\dots i_r}_{j_1\dots j_s\cdot k}
=\de{}{y^k}T^{i_1\dots i_r}_{j_1\dots j_s}.
$$

We define the operators
$$
\nabla_{|}:C^\infty(\beta^r_sM)\to C^\infty(\beta^r_{s+1}M),
\quad
\nabla_{\cdot}:C^\infty(\beta^r_sM)\to C^\infty(\beta^r_{s+1}M)
$$
by
$$
(\nabla_{|} T)^{i_1\dots i_r}_{j_1\dots j_s k}
=\nabla_{|k} T^{i_1\dots i_r}_{j_1\dots j_{s}}
:=T^{i_1\dots i_r}_{j_1\dots j_s|k}
$$
and
$$
(\nabla_{\cdot} T)^{i_1\dots i_r}_{j_1\dots j_{s}k}
=\nabla_{\cdot k} T^{i_1\dots i_r}_{j_1\dots j_{s}}
=T^{i_1\dots i_r}_{j_1\dots j_s\cdot k}.
$$

For convenience, we also define $\nabla^{|}$ and $\nabla^{\cdot}$
by
$$\nabla^{|i}=g^{ij} \nabla_{|j},\quad
\nabla^{\cdot i}=g^{ij}\nabla_{\cdot j}.
$$


\subsection{Modified horizontal derivative for magnetic flows}
The form $\Omega$, regarded as an antisymmetric tensor field
$(\Omega_{ij})\in C^\infty(\tau^0_2 M)$, gives rise to a corresponding
semibasic tensor field. We
define the {\it Lorentz force} $Y\in C^\infty(\beta^1_1M)$ by
\begin{equation}\label{lorentz}
Y^i_j(x,y)=\Omega_{jk}(x)g^{ik}(x,y).
\end{equation}
We also define
$$
Y(U)=(Y^i_j U^j).
$$
Note that $Y$ is skew symmetric with respect to $g$:
$$
\langle Y(U),V\rangle=-\langle U,Y(V)\rangle.
$$
Straightforward calculations show that
\begin{equation}\label{spray}
\G(x,y)=y^i\del{}{x^i}+y^iY^j_i\de{}{y^j}.
\end{equation}

If $u\in C^\infty(TM\setminus\{0\})$, then by (\ref{spray})
$$
\G u(x,y)=y^i\left(\del u{x^i}+Y^j_i\de u{y^j}\right)
=y^i(u_{|i}+Y^j_iu_{\cdot j}).
$$

Suppose that for a smooth function $u:SM\to \mathbb R$
we have
$$
\G u=\varphi.
$$
Extend $u$ to a positively homogeneous function (of degree $0$)
on $TM\setminus\{0\}$, denoting the extension by $u$ again.

For $(x,y)\in TM$, define
$$
\mathbf X u=y^i(u_{|i}+FY^j_iu_{\cdot j}).
$$

Then on $TM\setminus\{0\}$
we have
$$
\mathbf Xu=\phi,
$$
where $\phi$ is the positively homogeneous extension of $\varphi$ to
$TM\setminus\{0\}$ of degree $1$.

Given $T=(T^{i_1\dots i_r}_{j_1\dots j_s})\in C^\infty(\beta^r_s M)$,
put
$$
T^{i_1\dots i_r}_{j_1\dots j_s: k}
=T^{i_1\dots i_r}_{j_1\dots j_s|k}
+FY^j_k T^{i_1\dots i_r}_{j_1\dots j_s\cdot j}.
$$

Finally, given $u\in
C^\infty(TM\setminus\{0\})$, define
$$
\nabla^{:}u=(u^{:i})=(g^{ij} u_{:j}).
$$

\subsection{An integral identity}A crucial element in our proofs is the following integral
version of the Pestov identity proved in \cite{DP2}:

\begin{multline}\label{pestov-integral-final}
\int_{SM}\big\{|\mathbf X (\nabla^{\cdot} u)|^2
-\langle \mathbf R_y(\nabla^{\cdot} u),\nabla^{\cdot} u\rangle
-L(Y(y),\nabla^{\cdot} u,\nabla^{\cdot}u)
-\langle\nabla^{\cdot } (\mathbf X u),Y(\nabla^{\cdot} u)\rangle
\\
-2\langle Y(y),\nabla^{\cdot}u\rangle^2
+\langle \nabla^{:} u,Y(\nabla^{\cdot}u)\rangle
+\langle\nabla_{|(\nabla^{\cdot}u)}Y(y),\nabla^{\cdot}u\rangle\big\}\,d\mu
\\
=\int_{SM}\big\{|\nabla^{\cdot} (\mathbf X  u)|^2
-n(\mathbf X u)^2\big\}\,d\mu.
\end{multline}
We note the following points:
\begin{enumerate}
\item $\nabla^{\cdot}u$ vanishes if and only if $u$ is the pull back of
a function on $M$;
\item ${\bf R}$ and $L$ are respectively the Riemann curvature operator and the
Landsberg tensor from Finsler geometry; $Y$
is the Lorentz force associated with the magnetic field;
\item $n$ is the dimension of $M$.
\end{enumerate}

We may regard the identity as a kind of ``dynamical Weitzenb\"ock
formula". We will also need the following lemma \cite[Lemma 4.4]{DP2}:

\begin{Lemma}\label{int-nabla}
Let $\phi\in C^\infty(TM\setminus\{0\})$ be such that
$\phi=\varphi_0 F+\psi$, where $\varphi_0$
is independent of $y$ while $\psi$ depends linearly on $y$. Then
$$
\int_{SM}|\nabla^{\cdot}\phi|^2\,d\mu
=\int_{SM}(\varphi_0^2+n\psi^2)\,d\mu.
$$
\end{Lemma}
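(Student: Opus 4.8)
\emph{Sketch.} The plan is to reduce the identity to two purely fibrewise integral relations and then establish those by transporting the problem to the cotangent fibre, where it becomes two applications of the Euclidean divergence theorem. First, write $\psi(x,y)=a_i(x)y^i$ for a smooth $1$-form $a$, and set $|a|^2:=g^{ij}a_ia_j$, a function on $SM$. Homogeneity gives $[F^2]_{y^i}=2g_{ij}y^j$, hence $F_{\cdot i}=g_{ij}y^j/F$ and $\nabla^{\cdot}F=(y^i/F)$; since $\psi$ is linear, $\psi_{\cdot i}=a_i$. Thus on $SM$ (where $F=1$) one has $\nabla^{\cdot}\phi=\varphi_0\,y+(g^{ij}a_j)$, and contracting with $g$, using $|y|^2=F^2=1$, $\langle y,(g^{ij}a_j)\rangle=a_iy^i=\psi$ and $|(g^{ij}a_j)|^2=|a|^2$, gives the pointwise identity $|\nabla^{\cdot}\phi|^2=\varphi_0^2+2\varphi_0\psi+|a|^2$ on $SM$. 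So the lemma is equivalent to $\int_{SM}\varphi_0\psi\,d\mu=0$ and $\int_{SM}|a|^2\,d\mu=n\int_{SM}\psi^2\,d\mu$. Since $\varphi_0$ and $a$ are pulled back from $M$, integrating over the fibres of $\pi\colon SM\to M$ reduces both to the fibrewise statements
\[
\int_{S_xM}y^i\,d\mu_x=0,\qquad \int_{S_xM}g^{ij}(x,y)\,d\mu_x=n\int_{S_xM}y^iy^j\,d\mu_x\qquad(x\in M),
\]
$d\mu_x$ being the fibre Liouville measure on $S_xM$.

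To prove these I would pass to the tangent fibre $T_xM$. The symplectic volume of $TM$ is $\det(g_{ij})\,dx\,dy$ (the Cartan-tensor correction in $\partial(g_{ij}y^j)/\partial y^k$ vanishes by homogeneity), and the Leray quotient by $d(\tfrac12F^2)$ shows that, up to a constant, $d\mu_x=\det(g)\,\iota_E(dy^1\wedge\cdots\wedge dy^n)|_{S_xM}$, with $E=y^i\,\partial/\partial y^i$ the radial field. For $G$ homogeneous of degree $d$ in $y$, $d\big(G\det(g)\,\iota_E\nu\big)=(n+d)\,G\det(g)\,\nu$ ($\nu=dy^1\wedge\cdots\wedge dy^n$), so Stokes' theorem on $B_x=\{F(x,\cdot)\le1\}$ gives $\int_{S_xM}G\,d\mu_x=(n+d)\int_{B_x}G\,\det(g)\,dy$. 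Changing variables by the Legendre transform $\ell_F$ — which carries $B_x$ onto the dual ball $B_x^{*}=\{F^{*}\le1\}$ with Jacobian $\det(g)$, sends $y^i$ to $\partial_{p_i}H^{*}$ and $g^{ij}(x,y)$ to $\partial_{p_i}\partial_{p_j}H^{*}$, where $H^{*}=\tfrac12(F^{*})^2$ and $F^{*}$ is the dual norm — the two fibrewise identities become $\int_{B_x^{*}}\partial_{p_i}H^{*}\,dp=0$ and $(n+2)\int_{B_x^{*}}\partial_{p_i}H^{*}\,\partial_{p_j}H^{*}\,dp=\int_{B_x^{*}}\partial_{p_i}\partial_{p_j}H^{*}\,dp$.

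The first holds because $\int_{B_x^{*}}\partial_{p_i}(H^{*}-\tfrac12)\,dp=\int_{\partial B_x^{*}}(H^{*}-\tfrac12)\,n^i\,dS=0$, $H^{*}$ being $\equiv\tfrac12$ on $\partial B_x^{*}$. For the second, the divergence theorem applied to $V^k=p_k\,\partial_{p_i}H^{*}\,\partial_{p_j}H^{*}$ — for which $\operatorname{div}V=(n+2)\,\partial_{p_i}H^{*}\,\partial_{p_j}H^{*}$ by Euler's relation $p_k\,\partial_{p_k}\partial_{p_i}H^{*}=\partial_{p_i}H^{*}$ for the degree-two $H^{*}$ — turns the left side into $\int_{\partial B_x^{*}}(p\cdot n)\,\partial_{p_i}H^{*}\,\partial_{p_j}H^{*}\,dS$; on $\partial B_x^{*}$ one has $n=\nabla H^{*}/|\nabla H^{*}|$, hence $p\cdot n=2H^{*}/|\nabla H^{*}|=1/|\nabla H^{*}|$ and $(p\cdot n)\,\partial_{p_i}H^{*}=n^i$, so the boundary integral is $\int_{\partial B_x^{*}}n^i\,\partial_{p_j}H^{*}\,dS=\int_{B_x^{*}}\partial_{p_i}\partial_{p_j}H^{*}\,dp$. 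This completes the argument.

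The genuine work is the second step: pinning down $d\mu_x=\det(g)\,\iota_E\nu|_{S_xM}$ and handling the standard Finsler/Legendre facts ($\partial(g_{ij}y^j)/\partial y^k=g_{ik}$ by homogeneity of the Cartan tensor; $\ell_F$ has Jacobian $\det g$ and sends $\{F=1\}$ onto $\{F^{*}=1\}$; $y^i=\partial_{p_i}H^{*}$ and $g^{ij}=\partial_{p_i}\partial_{p_j}H^{*}$ under Legendre duality). Once the problem lives on the dual ball $B_x^{*}$, where the unit sphere carries the constant defining function $H^{*}\equiv\tfrac12$, the identities drop out of the divergence theorem (excising a small ball about the origin if one wishes to avoid the mild homogeneity singularity there). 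Alternatively, these two moment identities for $d\mu_x$ are classical in Finsler integral geometry and may simply be quoted.
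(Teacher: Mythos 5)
Your argument is correct. Note first that the paper under review does not actually prove this lemma: it is quoted verbatim from \cite[Lemma 4.4]{DP2}, so there is no in-text proof to match yours against. Your reduction is sound: on $SM$ one has $\nabla^{\cdot}\phi=\varphi_0\,y+(g^{ij}a_j)$ with $\psi=a_iy^i$, hence $|\nabla^{\cdot}\phi|^2=\varphi_0^2+2\varphi_0\psi+g^{ij}a_ia_j$, and the lemma is exactly the pair of fibrewise moment identities $\int_{S_xM}y^i\,d\mu_x=0$ and $\int_{S_xM}g^{ij}\,d\mu_x=n\int_{S_xM}y^iy^j\,d\mu_x$ (neither of which is a symmetry triviality in the non-reversible Finsler setting, so they do need the proof you give). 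Your identification of $d\mu_x$ with $\det(g)\,\iota_E\nu|_{S_xM}$, the homogeneity formula $d\bigl(G\det(g)\,\iota_E\nu\bigr)=(n+d)G\det(g)\,\nu$, and the Legendre dictionary $y^i=\partial_{p_i}H^*$, $g^{ij}=\partial_{p_i}\partial_{p_j}H^*$, Jacobian $\det(g)$, are all correct, and the two divergence-theorem computations on the dual ball $B_x^*$ (where $H^*\equiv\tfrac12$ on the boundary and $\partial_pH^*$ is Lipschitz through the origin) go through. The route in \cite{DP2} is different in flavour: there the identities are extracted from the vertical Gauss--Ostrogradskii (divergence) formula for semibasic fields on the sphere bundle, in the style of Sharafutdinov's tensor calculus, without ever leaving the tangent side. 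Your version buys a more self-contained and elementary argument --- everything reduces to the Euclidean divergence theorem on a convex body whose boundary is a level set of the quadratically homogeneous $H^*$ --- at the cost of invoking the Legendre duality facts explicitly; either way the essential content, the two fibrewise moments of the Liouville measure, is the same.
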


In \cite{DP2} we dealt with the left hand side of (\ref{pestov-integral-final})
assuming that $\phi$ is Anosov. We will show in the next section, how to bypass
hyperbolicity just using the Green subbundles and a further calculation to show
that $\theta$ in Theorem A must be closed.

\section{Proof of Theorem A}

Define
$$
\mathcal C(Z)=\mathbf R_y(Z)-Y(\mathbf X Z)-(\nabla_{|Z} Y)(y).
$$

Then the following holds:
\begin{align*}
\langle \mathcal C(\nabla^{\cdot}u),\nabla^{\cdot}u\rangle
&=\langle \mathbf R_y(\nabla^{\cdot} u),\nabla^{\cdot} u\rangle
+\langle \mathbf X(\nabla^{\cdot}u),Y(\nabla^{\cdot} u)\rangle
-\langle(\nabla_{|(\nabla^{\cdot} u)}Y)(y),\nabla^{\cdot}u\rangle
\\
&=\langle \mathbf R_y(\nabla^{\cdot}u),\nabla^{\cdot} u\rangle
+\langle \nabla^{\cdot}(\mathbf X  u)-\nabla^{:}u
-\langle Y(y),\nabla^{\cdot}u\rangle y,
Y(\nabla^{\cdot}u)\rangle
\\
&\quad-\langle(\nabla_{(\nabla^{\cdot} u)}Y)(y),\nabla^{\cdot}u\rangle
\\
&=\langle \mathbf R_y(\nabla^{\cdot}u),\nabla^{\cdot} u\rangle
+\langle \nabla^{\cdot}(\mathbf X  u),
Y(\nabla^{\cdot}u)\rangle
-\langle\nabla^{:}u,Y(\nabla^{\cdot}u)\rangle
\\
&\quad+\langle Y(y),\nabla^{\cdot}u\rangle^2
-\langle(\nabla_{|(\nabla^{\cdot} u)}Y)(y),\nabla^{\cdot}u\rangle.
\end{align*}

Suppose $\G u=h\circ\pi+\theta$ and extend $u$
to a positively homogeneous function of degree zero on
$TM\setminus\{0\}$ (still denoted by $u$).  Then $\mathbf X(u)=F
h\circ\pi+\theta$. From
(\ref{pestov-integral-final}) and Lemma \ref{int-nabla}
we infer that
\begin{equation}
\int_{SM}\big\{|\mathbf X \nabla^{\cdot} u|^2
-\langle \mathcal C( \nabla^{\cdot} u), \nabla^{\cdot} u\rangle
-L(Y(y), \nabla^{\cdot} u, \nabla^{\cdot} u)
-\langle Y(y), \nabla^{\cdot} u\rangle^2\big\}\,d\mu
\label{menqc}
\end{equation}
\[\leq -\int_{SM}(h\circ\pi)^2\,d\mu.\]

For each $(x,y)\in SM$, let $\P(x,y):T_{x}M\to T_{x}M$ be the orthogonal projection
onto $\{y\}^{\perp}$ with respect to the fundamental tensor at $(x,y)$.

\begin{Theorem} Assume $SM$ has no conjugate points and let $Z(x,y)$ be a semibasic
vector field with $\langle Z(x,y),y\rangle=0$ for all $(x,y)\in SM$. Let $S(x,y):T_{x}M\to T_{x}M$
be a linear map whose graph is one of the Green subbundles.

Then
\[\int_{SM}|\P(\mathbf X Z-S(Z))|^2\,d\mu=
\int_{SM}\big\{|\mathbf X Z|^2
-\langle \mathcal C( Z), Z\rangle
-L(Y(y), Z, Z)
-\langle Y(y), Z\rangle^2\big\}\,d\mu.\]
Moreover
\[\int_{SM}|\P(\mathbf X Z-S(Z))|^2\,d\mu=0\]
if and only if
\[Z_{0}(t):=Z(\phi_{t}(x,y))+\left(\int_{0}^{t}f(\phi_{s}(x,y))\,ds\right)\,\dot{\ga}(t)\]
is a magnetic Jacobi field along $\ga(t)=\pi\circ\phi_{t}(x,y)$ for all
$(x,y)\in SM$, where 
\[f(x,y):=\langle Z(x,y),Y_{x}(y)\rangle.\]

\label{new}
\end{Theorem}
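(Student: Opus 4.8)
The plan is to reduce the integral identity to a pointwise/orbitwise statement about the Riccati-type equation satisfied by the Green subbundle $S$, and then recognize the resulting expression as a perfect square. First I would fix $(x,y)\in SM$, write $\gamma(t)=\pi\circ\phi_t(x,y)$, and transport everything along the orbit: set $Z(t):=Z(\phi_t(x,y))$, $Y(t):=Y_{\gamma(t)}$, etc. The key analytic input is that $E(x,y)$, being the graph of $S(x,y)$, is invariant under $d\phi_t$ and is contained in $T(SM)$; differentiating the graph relation along the flow shows that $S$ satisfies a (magnetic) matrix Riccati equation, whose precise form encodes exactly the operator $\mathcal C$ together with the Lorentz terms $L(Y(y),\cdot,\cdot)$ and $\langle Y(y),\cdot\rangle^2$ appearing in \eqref{menqc}. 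Concretely, for any semibasic $Z$ with $\langle Z,y\rangle=0$ one should obtain the pointwise identity
\[
|\P(\mathbf X Z-S(Z))|^2=|\mathbf X Z|^2-\langle \mathcal C(Z),Z\rangle-L(Y(y),Z,Z)-\langle Y(y),Z\rangle^2+\frac{d}{dt}\,\Psi(t),
\]
for a suitable bilinear expression $\Psi$ in $Z$ and $S(Z)$ (essentially $\langle S(Z),Z\rangle$ corrected by a term involving $f=\langle Z,Y(y)\rangle$). Integrating over $SM$ and using invariance of the Liouville measure $\mu$ kills the total-derivative term $\int_{SM}\mathbf X(\cdots)\,d\mu=0$, which yields the displayed identity.

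For the ``moreover'' part, the left side vanishes iff $\P(\mathbf X Z-S(Z))=0$ $\mu$-a.e., i.e. $\mathbf X Z-S(Z)$ is parallel to $\dot\gamma$ along a.e. orbit, say $\mathbf X Z-S(Z)=c(t)\dot\gamma(t)$. Here one has to identify the scalar $c(t)$: pairing with $Y(y)$ and using skew-symmetry of $Y$ together with $\langle Z,y\rangle=0$ forces $c(t)=-f(\phi_t(x,y))$ (this is exactly where the correction function $f$ enters). Then I would show that the statement ``$\mathbf X Z+f\,\dot\gamma=S(Z)$ along every orbit'' is equivalent to ``$Z_0(t)=Z(t)+\big(\int_0^t f\,ds\big)\dot\gamma(t)$ is a magnetic Jacobi field'': since the Green subbundle $E$ consists precisely of (the initial conditions of) the magnetic Jacobi fields that are ``asymptotically'' bounded, and since $E$ is the graph of $S$, a vector field lies along $E$ iff its ``velocity'' relative to the horizontal/vertical splitting is $S$ applied to its horizontal part; the shift by $\int_0^t f$ is the antiderivative that converts $\mathbf X Z$ into the genuine covariant derivative $\mathbf X Z_0$ of the shifted field, so that $\mathbf X Z_0=S(Z_0)$, which is exactly the defining (first-order, Lagrangian) form of the magnetic Jacobi equation on $E$. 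Running this backwards gives the converse.

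The main obstacle I expect is bookkeeping in the first step: deriving the correct Riccati equation for $S$ in the \emph{magnetic} (and Finsler) setting and matching it term-by-term with the operator $\mathcal C(Z)=\mathbf R_y(Z)-Y(\mathbf X Z)-(\nabla_{|Z}Y)(y)$ plus the Landsberg term $L(Y(y),Z,Z)$ and the $\langle Y(y),Z\rangle^2$ term, so that the leftover really is the perfect square $|\P(\mathbf X Z-S(Z))|^2$ up to a total $\mathbf X$-derivative. The skew-symmetry of $Y$, the homogeneity extension used to define $\mathbf X$, and the fact that $E$ never meets the vertical subspace (so $S$ is a bona fide $L^\infty$ endomorphism) are the facts that make the identification go through; once the pointwise identity is in hand, the integration and the equality-case analysis are routine.
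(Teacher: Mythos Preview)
Your approach via the Riccati equation for $S$ is sound and is a legitimate alternative to the paper's proof, though the paper does \emph{not} derive or use the Riccati equation explicitly. Instead it fixes an orbit, picks a basis $J_1,\dots,J_n$ of magnetic Jacobi fields coming from the Green subbundle $E$ (with $J_1=\dot\gamma$), writes $Z(t)=\sum_i f_i(t)J_i(t)$, and computes the index form
\[
\I=\int_0^T\big\{|\dot Z|^2-\langle\mathcal C(Z),Z\rangle-L(Y(\dot\gamma),Z,Z)-\langle Y(\dot\gamma),Z\rangle^2\big\}\,dt
\]
by integrating by parts, using $\mathcal A(J_i)=0$ and the Lagrangian identity $\langle J_i,\dot J_j\rangle-\langle\dot J_i,J_j\rangle+\langle Y(J_i),J_j\rangle=0$. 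This yields $\I=\int_0^T|\P(\dot Z-S(Z))|^2\,dt+\langle S(Z),Z\rangle\big|_0^T$, with boundary term exactly $\langle S(Z),Z\rangle$ (no $f$-correction is needed, contrary to your guess). The paper's route sidesteps the bookkeeping bottleneck you correctly identify: matching the magnetic--Finsler Riccati equation term-by-term with $\mathcal C$, $L(Y(y),\cdot,\cdot)$ and $\langle Y(y),\cdot\rangle^2$. Your route, once that matching is done, gives a cleaner pointwise identity; the Jacobi-basis route trades this for a computation that stays at the level of the second-order equation $\mathcal A=0$.

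Two small corrections to your ``moreover'' argument. First, to identify the scalar $c(t)$ in $\mathbf X Z-S(Z)=c\,\dot\gamma$ you should pair with $\dot\gamma$, not with $Y(y)$: differentiating $\langle Z,\dot\gamma\rangle=0$ gives $\langle\dot Z,\dot\gamma\rangle=-\langle Z,Y(\dot\gamma)\rangle=-f$, and $\langle S(Z),\dot\gamma\rangle=0$ because $E\subset T(SM)$, whence $c=-f$. Second, vanishing of the integral only gives $\P(\mathbf X Z-S(Z))=0$ for $\mu$-a.e.\ $(x,y)$. To upgrade to \emph{all} $(x,y)$ the paper rewrites the conclusion as $\mathcal A(Z)+\mathbf X(\langle Z,Y(y)\rangle\,y)=0$, observes that the left side is a smooth differential operator in the smooth field $Z$, and uses continuity to pass from a full-measure set to all of $SM$. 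You should include this step; otherwise the ``for all $(x,y)$'' in the statement is not justified.
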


\begin{proof} Fix $v\in SM$ and $T>0$ and let $\gamma$ be the unit speed magnetic geodesic
determined by $v$.
Let
\[\I:=\int_0^T \big\{|\dot Z|^2-\langle \mathcal C(Z),Z\rangle
-L(Y(\dot\gamma),Z,Z)
-\langle Y(\dot\gamma),Z\rangle^2 \big\}\,dt.\]
Since
$$
\langle \ddot Z,Z\rangle=D_{\dot\gamma}(\langle \dot Z,Z\rangle)
-|\dot Z|^2
+\langle (\nabla_{\cdot \dot Z}Y)(\dot\gamma),Z\rangle,
$$
we have
\[\I=
\langle \dot{Z},Z\rangle\bigg|_{0}^{T}-\int_{0}^{T}\{\left\langle {\mathcal A}(Z),Z\right\rangle+\langle Y(\dot{\gamma}),Z\rangle^{2}\}\,dt,\]
where
\begin{align*}
{\mathcal A}(Z)&=\ddot{Z}+\mathbf R_{\dot{\gamma}}(Z)
-Y(\dot{Z})-(\nabla_{|Z}Y)(\dot{\gamma}) -(\nabla_{\cdot \dot
Z}Y)(\dot\gamma)-L(Z,Y(\dot\gamma))   \label{A}
\\
&=\ddot Z+\mathcal C(Z) -(\nabla_{\cdot \dot
Z}Y)(\dot\gamma)-L(Z,Y(\dot\gamma)).\nonumber
\end{align*}

If $\xi\in E(v)$, then $J_{\xi}(t)=d\pi\circ d\phi_{t}(\xi)$ satisfies the Jacobi equation
${\mathcal A}(J_{i})=0$. 

Since for all $t\in \mathbb R$,
\[\left. d\pi_{\dot{\gamma}(t)}\right|_{E(\dot{\gamma}(t))}:E(\dot{\gamma}(t))\rightarrow T_{{\gamma}(t)}M\]
is an isomorphism, there exists a basis $\{\xi_{1},\dots,\xi_{n}\}$ of $E(v)$ such
that $\{J_{\xi_{1}}(t),\dots,J_{\xi_{n}}(t)\}$ is a basis of $T_{{\gamma}(t)}M$ for all $t\in \mathbb R$.
Without loss of generality we may assume that $\xi_{1}=(v,S(v))$ and $J_{\xi_{1}}=\dot{\gamma}$.

Let us set for brevity $J_{i}=J_{\xi_{i}}$. Then we can write
\[ Z(t)=\sum_{i=1}^{n}f_{i}(t)J_{i}(t),\]
for some smooth functions $f_{1},\dots,f_{n}$ and thus,
\begin{equation}
\I=\langle\dot{Z},Z\rangle\bigg|_{0}^{T}-\sum_{i,j}\int_{0}^{T}\left\langle{\mathcal A}(f_{i}J_{i}),f_{j}J_{j}\right\rangle\,dt
-\int_{0}^{T}\langle Y(\dot{\gamma}),Z\rangle^{2}\,dt.      \label{bilinear}
\end{equation}
An easy computation shows that
\[{\mathcal A}(f_{i}J_{i})=\ddot{f}_{i}J_{i}
+2\dot{f}_{i}\dot{J}_{i}-\dot{f}_{i}Y(J_{i})
-\dot f_i(\nabla_{\cdot J_i}Y)(\dot\gamma)
+f_{i}{\mathcal A}(J_{i}).\]

Indeed,
$$
D_{\dot\gamma}D_{\dot\gamma}(f_iJ_i)
=\ddot f_iJ_i+2\dot f_i\dot J_i+f_i\ddot J_i,
$$
$$
\mathbf R_{\dot\gamma}(f_iJ_i)=f_i\mathbf R_{\dot\gamma}(J_i),
$$
$$
Y(D_{\dot\gamma}(f_iJ_i))
=\dot f_i Y(J_i)+f_i Y(\dot J_i),
$$
$$
(\nabla_{|f_iJ_i}Y)(\dot\gamma)=f_i(\nabla_{|J_i}Y)(\dot\gamma),
$$
$$
(\nabla_{\cdot D_{\dot\gamma}(f_iJ_i)}Y)(\dot\gamma)
=\dot f_i(\nabla_{\cdot J_i}Y)(\dot\gamma)
+f_i(\nabla_{\cdot \dot J_i}Y)(\dot\gamma),
$$
$$
\mathbf L(f_iJ_i,Y(\dot\gamma))=f_i\mathbf L(J_i,Y(\dot\gamma)).
$$

Since $J_{i}$ satisfies the Jacobi equation
${\mathcal A}(J_{i})=0$ we have
\[\langle{\mathcal A}(f_{i}J_{i}),J_{j}\rangle
=\ddot{f}_{i}\langle J_{i},J_{j}\rangle
+2\dot{f}_{i}\langle\dot{J}_{i},J_{j}\rangle
-\dot{f}_{i}\langle Y(J_{i}),J_{j}\rangle
-\dot f_i\langle(\nabla_{\cdot J_i}Y)(\dot\gamma),J_j\rangle.\]

Observe that since $E$ is a Lagrangian subspace,
\[\langle J_{i},\dot{J}_{j}\rangle
-\langle\dot{J}_{i},J_{j}\rangle
+\langle Y(J_{i}),J_{j}\rangle=0,\]
and then
\[\langle{\mathcal A}(f_{i}J_{i}),J_{j}\rangle
=\frac{d}{dt}(\dot{f}_{i}\langle J_{i},J_{j}\rangle).\]
Now we can write
\[\int_{0}^{T}\langle{\mathcal A}(f_{i}J_{i}),f_{j}J_{j}\rangle\,dt
=\left.\langle\dot{f}_{i}J_{i},f_{j}J_{j}\rangle\right|_{0}^{T}
-\int_{0}^{T}\langle\dot{f}_{i}J_{i},\dot{f}_{j}J_{j}\rangle\,dt.\]
Combining the last equality with (\ref{bilinear}) we obtain
\[\I=\int_{0}^{T}\bigg|\sum_{i=1}^{n}\dot{f}_{i}J_{i}\bigg|^{2}\,dt
-\bigg\langle\sum_{i=1}^{n}\dot{f}_{i}J_{i}-\dot{Z},Z\bigg\rangle\bigg|_{0}^{T}
-\int_{0}^{T}\langle Y(\dot{\gamma}),Z\rangle^{2}\,dt.\]
Note that $\dot{J}_{i}(t)=S_{\dot{\gamma}(t)}J_{i}(t)$, hence
\[\sum_{i=1}^{n}f_{i}\dot{J}_{i}=S\left(\sum_{i=1}^{n}f_{i}J_{i}\right)=S(Z),\]
which implies together with $\dot{Z}=\sum_{i=1}^{n}\dot{f}_{i}J_{i}+\sum_{i=1}^{n}f_{i}\dot{J}_{i}$
that
\begin{equation}
\I=\int_{0}^{T}|\dot{Z}-S(Z)| ^{2}\,dt+\langle S(Z),Z\rangle\bigg|_{0}^{T}
-\int_{0}^{T}\langle Y(\dot{\gamma}),Z\rangle^{2}\,dt.
\label{finvi}
\end{equation}
Now let
\[W:=\sum_{i=2}^{n}\dot{f}_{i}J_{i}.\]
Since $J_1=\dot{\gamma}$ we have:
\[\bigg\langle
\sum_{i=1}^{n}\dot{f}_{i}J_{i},\sum_{i=1}^{n}\dot{f}_{i}J_{i}\bigg\rangle
=\langle \dot{f}_{1}\dot{\gamma}+W, \dot{f}_{1}\dot{\gamma}+W\rangle
=\dot{f}_{1}^{2}+2\dot{f}_{1}\langle \dot{\gamma},W\rangle+\langle W,W\rangle.\]
Differentiating $\langle Z,\dot{\gamma}\rangle=0$ we get
\[\langle \dot{Z},\dot{\gamma}\rangle+\langle Z,Y(\dot{\gamma})\rangle=0.\]
But
\[\langle \dot{Z},\dot{\gamma}\rangle=\left\langle \sum_{i=1}^{n}\dot{f}_{i}J_{i},\dot{\gamma}\right\rangle
=\dot{f}_{1}+\langle W,\dot{\gamma}\rangle\]
since $\langle \dot{J}_{i},\dot{\gamma}\rangle=0$ for all $i$. Therefore
\[\langle Y(\dot{\gamma}),Z\rangle^{2}=\dot{f}_{1}^{2}+2\dot{f}_{1}\langle W,\dot{\gamma}\rangle+
\langle W,\dot{\gamma}\rangle^{2}.\]
Thus
\[\left\langle \sum_{i=1}^{n}\dot{f}_{i}J_{i},\sum_{i=1}^{n}\dot{f}_{i}J_{i}\right\rangle
-\langle Y(\dot{\gamma}),Z\rangle^{2}=\langle W,W\rangle-\langle W,\dot{\gamma}\rangle^{2}.\]
If we let $W^{\perp}$ be the orthogonal projection of $W$ to $\dot{\gamma}^{\perp}$, the last equation
and (\ref{finvi}) give:
\[\I=\int_{0}^{T}| W^{\perp}|^{2}\,dt+\langle S(Z),Z\rangle\bigg|_{0}^{T}.\]
Observe now that
\[\P(\dot{Z}-S(Z))=\P(W)=W^{\perp}\]
thus
\[\I=\int_{0}^{T}|\P(\dot{Z}-S(Z))|^{2}\,dt+\langle S(Z),Z\rangle\bigg|_{0}^{T}\]
and we have established the equality
\begin{multline}\label{alongorb}
\int_0^T \big\{|\mathbf X Z|^2-\langle \mathcal C(Z),Z\rangle
-L(Y(\dot\gamma),Z,Z)
-\langle Y(\dot\gamma),Z\rangle^2 \big\}\,dt\\
=\int_{0}^{T}|\P(\mathbf X Z-S(Z))|^{2}\,dt+\langle S(Z),Z\rangle\bigg|_{0}^{T}
\end{multline}

We now set $T=1$ in (\ref{alongorb}) and we integrate the equality
with respect to the Liouville measure $\mu$. Since $\phi$ preserves $\mu$
we have
\[\int_{SM}\langle S(Z)(\phi_{1}(x,y)),Z(\phi_{1}(x,y))\rangle\,d\mu=
\int_{SM}\langle S(Z)(x,y),Z(x,y)\rangle\,d\mu.\]
Thus, using Fubini's theorem we obtain
\[\int_{SM}|\P(\mathbf X Z-S(Z))|^2\,d\mu=
\int_{SM}\big\{|\mathbf X Z|^2
-\langle \mathcal C( Z), Z\rangle
-L(Y(y), Z, Z)
-\langle Y(y), Z\rangle^2\big\}\,d\mu.\]

Suppose now
\[\int_{SM}|\P(\mathbf X Z-S(Z))|^2\,d\mu=0.\]
Using Fubini's theorem again we have for any $T\in\re$
\[\int_{SM}\left(\int_{-T}^{T}|\P(\mathbf X Z-S(Z))|^2(\phi_{t}(x,y))\,dt\right)\,d\mu(x,y)=0.\]
Since $S$ is smooth along the flow we conclude that for all $t\in\re$
\[\P(\mathbf X Z-S(Z))(\phi_{t}(x,y))=0\]
for almost every $(x,y)\in SM$. Let
\[Z_{0}(t):=Z(\phi_{t}(x,y))+x(t)\dot{\ga}(t)\]
where
\[x(t):=\int_{0}^{t}f(\phi_{s}(x,y))\,ds.\]
Now observe that
\[\dot{Z}_{0}=\dot{Z}+xY(\dot{\ga})+\dot{x}\dot{\ga},\]
\[SZ_{0}=SZ+xY(\dot{\ga}),\]
hence
\[\dot{Z}-SZ+\dot{x}\dot{\ga}=\dot{Z}_{0}-SZ_{0}.\]
Now note that since $\langle Z,\dot{\ga}\rangle=0$,
$\dot{x}=\langle Z,Y(\dot{\ga})\rangle=-\langle \dot{Z},\dot{\ga}\rangle$.
Also $\langle S(x,y)(z),y\rangle=0$ for any $(x,y)\in SM$ and $z\in T_{x}M$, since
the Green subbundle is contained in $T(SM)$.
It follows that
\[\langle \dot{Z}_{0}-SZ_{0}, \dot{\ga}\rangle=0\]
and thus
\[0=\P(\dot{Z}-SZ+\dot{x}\dot{\ga})=\P(\dot{Z}_{0}-SZ_{0})=
\dot{Z}_{0}-SZ_{0}.\]
Hence $Z_{0}$ is a Jacobi field along $\ga$ for almost every $(x,y)\in SM$.
This implies
\[0=\mathcal A (Z_{0})=\mathcal A(Z)+\mathcal A(x\dot{\ga}).\]
An easy calculation shows that
\[\mathcal A(x\dot{\ga})=\frac{D}{dt}\mathbf(f\dot{\ga})\]
therefore
\[\mathcal A(Z)+\mathbf X(f\dot{\ga})=0\]
for all $t\in \re$ and almost every $(x,y)\in SM$. 
Consider now the operator acting on semibasic vector fields $V$ given by
\[V\mapsto \mathcal A(V)+\mathbf X(\langle V,Y(y)\rangle\,y)\]
This operator annihilates $Z$ for almost every $(x,y)\in SM$ and since $Z$ is smooth
it must annihilate $Z$ for {\it every} $(x,y)\in SM$. Going backwards we now deduce
that $Z_{0}(t)$ is a Jacobi field along {\it every} magnetic geodesic $\ga$ as desired.

Conversely, it is now easy to check that if $Z_{0}(t)$ is a Jacobi field along every magnetic geodesic,
then
\[\int_{SM}|\P(\mathbf X Z-S(Z))|^2\,d\mu=0.\]

\end{proof}

\subsection{Proof of Theorem A} First observe that (\ref{menqc}) and Theorem \ref{new} imply right away that
$h=0$.
Let us show that $\theta$ must be closed.

Note that (\ref{menqc}) and Theorem \ref{new} also imply that $Z=\nabla^\cdot u$ satisfies the equation
\begin{equation}\label{jacobi-global}
\mathcal A(Z)+\mathbf X(fy)=0,
\end{equation}
with 
$$
f(x,y)=\langle Z(x,y),Y_x(y)\rangle,
$$
\begin{equation}
\mathcal A(Z)=\mathbf X^2 Z+\mathbf R_y(Z)
-Y(\mathbf X Z)-(\nabla_{|Z}Y)(y) 
-(\nabla_{\cdot \mathbf X Z}Y)(y)-\mathbf L(Z,Y(y)).   \label{A}
\end{equation}

Note that (see \cite[Proof of Lemma 4.7]{DP2})
\begin{equation*}
\mathbf X(\nabla^{\cdot}u)
=\nabla^{\cdot}(\mathbf Xu)
-\nabla^{:}u
-\langle Y(y),\nabla^{\cdot}u\rangle y.
\end{equation*}

Therefore,
\begin{align}\label{x2z}
\mathbf X^2 Z&=\mathbf X[\nabla^{\cdot}(\mathbf Xu)-\nabla^{:}u -f y]\nonumber
\\
&=\mathbf X[\nabla^{\cdot}(\mathbf Xu)]-\mathbf X(\nabla^{:}u) -\mathbf X(f y).
\end{align}

We have
\begin{align}\label{xnu}
\mathbf X(\nabla^{:}u)&=y^k( g^{ij}u_{:j})_{:k}
=y^k\left(g^{ij}_{:k}u_{:j}+g^{ij} u_{:j:k}\right)\nonumber
\\
&=-2y^k Y^s_k g^{il}g^{jm}C_{lms}u_{:j}
+y^kg^{ij} \left[u_{:k:j}-(u_{:k:j}-u_{:j:k})\right]\nonumber
\\
&=-2\mathbf C(\nabla^{:} u, Y(y))
+g^{ij}[(y^ku_{:k})_{:j}-y^k_{:j}u_{:k}]
-y^k g^{ij}\tilde R^s_{kj}u_{\cdot s}\nonumber
\\
&=-2\mathbf C(\nabla^{:} u, Y(y))+\nabla^{:}(\mathbf X u)
-g^{ij}Y^k_j u_{:k}-y^k g^{ij}\tilde R^s_{kj}u_{\cdot s}\nonumber
\\
&=\nabla^{:}(\mathbf X u)+Y(\nabla^: u)-2\mathbf C(\nabla^{:} u, Y(y))
-y^k g^{ij}\tilde R^s_{kj}u_{\cdot s},
\end{align}
where $C$ is the Cartan tensor of $F$ and $\mathbf C(U,V)=\left(g^{il}C_{lms} U^mV^s\right)$.

Next,
\begin{equation}\label{tilde-r}
\begin{aligned}
y^k g^{ij}\tilde R^s_{kj}u_{\cdot s}&=y^k g^{ij}\left[R^s_{kj}+(Y^s_{k|j}-Y^s_{j|k})
-(P^s_{km}Y^m_j-P^s_{jm}Y^m_k)\right.
\\
&\quad +\left.(Y^m_kY^s_{j\cdot m}-Y^m_jY^s_{k\cdot m})
+y_m(Y^m_j Y^s_k-Y^m_k Y^s_j)\right] u_{\cdot s}.
\end{aligned}
\end{equation}

Note that
\begin{equation}\label{r}
y^k g^{ij} R^s_{kj}u_{\cdot s}=-\mathbf R_y(Z),
\end{equation}
\begin{align}\label{y-y}
y^k g^{ij}(Y^s_{k|j}-Y^s_{j|k})u_{\cdot s}
&=y^k g^{ij}\left((\Omega_{km}g^{sm})_{|j}
-(\Omega_{jm}g^{sm})_{|k}\right)u_{\cdot s}\nonumber
\\
&=y^k g^{ij}\left(\Omega_{km,j}g^{sm}
-\Omega_{jm,k}g^{sm}\right)u_{\cdot s}\nonumber
\\
&=-y^k g^{ij}\Omega_{kj,m}g^{sm}u_{\cdot s}=-y^k Y^i_{k|m}g^{sm}u_{\cdot s}\nonumber
\\
&=-(\nabla_{Z}Y)(y)
\end{align}
in view of the identity $\Omega_{km,j}+\Omega_{mj,k}+\Omega_{jk,m}=0$
($\Omega$ is closed),
\begin{equation}\label{p-p}
y^k g^{ij}\left(P^s_{km}Y^m_j-P^s_{jm}Y^m_k\right) u_{\cdot s}=\mathbf L(Z,Y(y))
\end{equation}
in view of \cite[(20) and (24)]{DP2},
\begin{multline}\label{yy}
y^k g^{ij}\left(Y^m_kY^s_{j\cdot m}-Y^m_jY^s_{k\cdot m}\right)u_{\cdot s}
\\
=-2y^k g^{ij}\left(Y^m_k Y^n_jg^{sl}C_{lnm}
-Y^m_jY^n_kg^{sl}C_{lnm}\right)u_{\cdot s}=0
\end{multline}
in view of \cite[(32)]{DP2} and the symmetry of $C$, and
\begin{equation}\label{yyy}
y^k g^{ij}y_m\left(Y^m_j Y^s_k-Y^m_k Y^s_j\right) u_{\cdot s}=-\langle Y(y),Z\rangle Y(y)
\end{equation}
by the skew symmetry of $Y$.

Using \eqref{xnu}--\eqref{yyy} in \eqref{x2z}, we obtain
\begin{equation}\label{x2zf}
\begin{aligned}
\mathbf X^2 Z&=\mathbf X[\nabla^{\cdot}(\mathbf Xu)]
-\nabla^{:}(\mathbf X u)-Y(\nabla^: u)+2\mathbf C(\nabla^{:} u, Y(y))
\\
&\quad -\mathbf R_y(Z)
+(\nabla_{Z}Y)(y)
-\mathbf L(Z,Y(y))
\\
&\quad-\langle Y(y),Z\rangle Y(y)
-\mathbf X(f y).
\end{aligned}
\end{equation}

Also,
\begin{align}
Y(\mathbf X Z)&=Y(\mathbf X(\nabla^\cdot u))
=Y\left[\nabla^{\cdot}(\mathbf Xu)
-\nabla^{:}u
-\langle Y(y),\nabla^{\cdot}u\rangle y\right]\nonumber
\\
&=Y(\nabla^{\cdot}(\mathbf Xu))\label{yxz}
-Y(\nabla^{:}u)
-\langle Y(y),Z\rangle Y(y),
\end{align}

\begin{align}
(\nabla_{\cdot \mathbf X Z}Y)(y)&=\left[(\mathbf X u)^{\cdot k}
-u^{:k}-\langle Y(y),\nabla^{\cdot}u\rangle y^k\right]y^jY^i_{j\cdot k}\nonumber
\\
&=-2\left[(\mathbf X u)^{\cdot k}
-u^{:k}-\langle Y(y),\nabla^{\cdot}u\rangle y^k\right]y^j
Y^m_j g^{il}C_{lmk}\nonumber
\\
&=-2\mathbf C(\nabla^\cdot(\mathbf X u),Y(y))
+2\mathbf C(\nabla^{:}u,Y(y)).\label{nxzy}
\end{align}

Using \eqref{A} and \eqref{x2zf}--\eqref{nxzy} in \eqref{jacobi-global}
and performing cancelations,
we deduce the following:
\begin{equation}\label{prefinal}
\mathbf X[\nabla^{\cdot}(\mathbf Xu)]
-\nabla^{:}(\mathbf X u)
-Y(\nabla^{\cdot}(\mathbf Xu))
+2\mathbf C(\nabla^\cdot(\mathbf X u),Y(y))=0.
\end{equation}

Since $\mathbf X u(x,y)=\theta_i(x)y^i$, we have
\begin{align*}
&\mathbf X[\nabla^{\cdot}(\mathbf Xu)]
-\nabla^{:}(\mathbf X u)
-Y(\nabla^{\cdot}(\mathbf Xu))
+2\mathbf C(\nabla^\cdot(\mathbf X u),Y(y))
\\
&\quad=y^k (g^{ij}\theta_i)_{:k}-g^{ij}(\theta_ky^k)_{:j}-Y^i_jg^{jk}\theta_k
+2g^{il}C_{lms}g^{mj}\theta_jY^s_k y^k
\\
&\quad=y^k(g^{ij}_{:k}\theta_j+g^{ij}\theta_{j,k})
-g^{ij}\theta_{k,j}y^k-g^{ij}\theta_ky^k_{:j}-Y^i_jg^{jk}\theta_k
+2y^kY^s_k g^{il}g^{jm}C_{lms}\theta_j
\\
&\quad=-2y^k Y^s_k g^{il}g^{jm}C_{lms}\theta_j +g^{ij}\theta_{j,k}y^k
-g^{ij}\theta_{k,j}y^k-g^{ij}\theta_kY^k_j-g^{jk}Y^i_j\theta_k
\\
&\qquad+2y^kY^s_k g^{il}g^{jm}C_{lms}\theta_j
\\
&\quad=g^{ij}(\theta_{j,k}-\theta_{k,j})y^k,
\end{align*}

Now, \eqref{prefinal} yields
$$
g^{ij}(\theta_{j,k}-\theta_{k,j})y^k=0,
$$
which means that the form $\theta$ is closed.

\qed

\begin{Remark}{\rm Observe that if we assume that the hyperbolic closed orbits of $\phi$
are dense, it is easy to show that $\theta$ is exact directly from (\ref{menqc}) and Theorem \ref{new}.  
Indeed we have
\[\int_{SM}|\P(\mathbf X Z-S(Z))|^2\,d\mu=0\]
where $Z=\nabla^{\cdot}u$. Since $Z$ is bounded, the Jacobi field
$Z_{0}$ grows at most linearly. If $(x,y)$ gives rise to a hyperbolic
closed orbit, then $Z$ must vanish along it. Since we are assuming
that the hyperbolic closed orbits are dense, $Z$ must vanish everywhere
on $SM$. It follows that $u(x,y)$ depends only on $x$ and $\theta$ is exact.

Also note that the set $Q:=\{(x,y)\in SM:\;Z(x,y)=0\}$ is $\phi$-invariant
and $\pi(Q)=M$ (cf. end of the proof of Theorem B).

}

\end{Remark}

\section{Invariant distributions and asymptotic cycles}
\label{dt}

In \cite{FF} L. Flaminio and G. Forni studied the cohomological equation for the
horocycle flow $\phi^h$ of a compact hyperbolic surface. Let $U$ be the vector field
generating $\phi^h$. They showed (among several other results) that the equation
$U(u)=v$ for $v\in C^{\infty}(SM)$ admits a $C^{\infty}$ solution $u$ if and only
if $\mathcal D(v)=0$ for every invariant distribution $\mathcal D$ of the horocycle flow.
Recall that an invariant distribution $\mathcal D$ is an element of the dual space of $C^{\infty}(SM)$
such that $\mathcal D(U(u))=0$ for every smooth function $u$. Let $\mathcal I(\phi^h)$
be the space of all invariant distributions. Flaminio and Forni also show that
$\mathcal I(\phi^h)$ is a vector space of infinite countable dimension
completely determined by the spectrum $\sigma$ of the Laplacian of $M$ and
the genus of $M$ as follows (their result also provides precise information about the Sobolev
regularity of the invariant distributions):

\[\mathcal I(\phi^h)=\bigoplus_{\mu\in\sigma}\mathcal I_{\mu}\oplus
\bigoplus_{n\in \Z^+}\mathcal I_{n}\]
where
\begin{itemize}
\item for $\mu=0$, $\mathcal I_{0}$ is spanned by the $PU(1,1)$-invariant volume;
\item for $\mu>0$, $\mathcal I_{\mu}$ has dimension equal to twice the multiplicity of
$\mu\in\sigma$;
\item for $n\in\Z^{+}$, the space $\mathcal I_{n}$ has dimension equal to twice the rank
of the space of holomorphic sections of the $n$-th power of the canonical line bundle over
$M$.
\end{itemize}

There is also an explicit calculation of $\mathcal I_{\mu}$ and $\mathcal I_{n}$ in terms
of appropriate bases of the subspaces of the irreducible representations.

Let $X$ be the infinitesimal generator of the geodesic flow and let $V$ be the infinitesimal
generator of the action of $S^1$ on the fibres of $SM\to M$. Let $H$ be the vector field
associated with the flow $R^{-1}\circ g_{t}\circ R$, where $g_t$ is the geodesic flow and
$R(x,y)=(x,iy)$. Using the basis $\{X,H,V\}$ we can write:
\[U=-H+V;\]
\[\G=X+V.\]

Define (cf. \cite{GK1,FF}):
\[\eta_{+}:=X-i\,H\]
and
\[\eta_{-}:=X+i\,H.\]

Let $L^2(SM)$ be the space of square integrable functions with respect
to the Liouville measure of $SM$.
The space $L^{2}(SM)$ decomposes into an orthogonal direct sum of
subspaces $\sum H_{n}$, $n\in\Z$, such that on $H_{n}$, $-i\,V$ is
$n$ times the identity operator;
$\eta_{+}$ extends to a densely defined operator from
$H_{n}$ to $H_{n+1}$ for all $n$. Moreover, its transpose is
$-\eta_{-}$.

An orthogonal basis of the representation of the principal or complementary series is given by
\[\dots,\eta_{-}^k f,\dots,\eta_{-}^2 f,\eta_{-}f,f,\eta_{+}f,\eta_{+}^2 f,\dots,\eta_{+}^k f,\dots\]
where $f$ is a normalized eigenfunction of the Laplacian on $M$ ($V(f)=0$).

On the other hand an orthogonal basis of the representation of the holomorphic discrete series $\pi^+_{n}$
is given by
\[f,\eta_{+}f,\eta_{+}^2 f,\dots,\eta_{+}^k f,\dots\]
where $f\in H_{n}$ and $\eta_{-}f=0$ (and also with unit norm). 
Similarly for the anti-holomorphic discrete series $\pi^{-}_{n}$.

Given a smooth 1-form $\theta$ we can decompose $\theta$ as
\[\theta=\theta_{-1}+\theta_{1}\]
where
\[2\theta_{-1}=\theta+i V(\theta),\]
\[2\theta_{1}=\theta-i V(\theta).\]
Clearly $\theta_{\pm 1}\in H_{\pm 1}$.

The following lemma is straightforward:

\begin{Lemma} The form $\theta$ is closed if and only if $\Im \eta_{-}\theta_{1}=0$.
The form $\theta$ is coclosed if and only if $\Re \eta_{-}\theta_{1}=0$.
Also, $\theta$ is closed if and only $V(\theta)$ coclosed {\rm(}and hence $\theta$ is coclosed
if and only if $V(\theta)$ is closed since $V^2(\theta)=-\theta${\rm)}.
\label{util}
\end{Lemma}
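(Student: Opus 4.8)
The plan is to verify Lemma \ref{util} by direct computation, breaking it into its three asserted equivalences and reducing each to a routine identity in terms of the operators $\eta_\pm$, $V$, and the Lie derivative $X$, together with the standard structure equations $[V,X]=H$, $[V,H]=-X$, $[X,H]=-K V$ (with $K\equiv -1$ on a hyperbolic surface), which in turn yield $\eta_\pm V = (V\mp 1)\eta_\pm$.

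First I would fix the dictionary between $1$-forms on $M$ and functions on $SM$: a $1$-form $\theta$ pulls back to the function $(x,y)\mapsto \theta_x(y)$, which lies in $H_{-1}\oplus H_1$, and $d\theta$ corresponds (up to a fixed constant) to the function $X(\theta) + (\text{something in } H_0)$; more precisely, writing $\theta=\theta_{-1}+\theta_1$ as in the text, the exterior derivative is detected by the $H_0$-component of $X\theta$, namely by $\eta_+\theta_{-1}+\eta_-\theta_1$, while the codifferential $\delta\theta$ is detected by its imaginary part versus real part. So I would first record the two formulas: $d\theta=0 \iff \eta_+\theta_{-1}+\eta_-\theta_1=0$ and $\delta\theta=0 \iff$ the conjugate combination vanishes. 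Since $\theta$ is real, $\overline{\theta_1}=\theta_{-1}$ and $\overline{\eta_-}=-\eta_+$ (as densely defined operators, using that the transpose of $\eta_+$ is $-\eta_-$), hence $\overline{\eta_+\theta_{-1}}=-\eta_-\theta_1$. Therefore $\eta_+\theta_{-1}+\eta_-\theta_1 = \eta_-\theta_1-\overline{\eta_-\theta_1} = 2i\,\Im(\eta_-\theta_1)$, which gives immediately that $\theta$ is closed iff $\Im\eta_-\theta_1=0$; the coclosed case is the analogous computation with the real part, since $\delta\theta$ picks up the orthogonal combination $\eta_+\theta_{-1}-\eta_-\theta_1 = -2\Re(\eta_-\theta_1)$.

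For the third equivalence I would compute $V(\theta)$: since $V$ acts as $-i$ on $H_{-1}$ and as $+i$ on $H_1$, we get $V(\theta) = -i\theta_{-1}+i\theta_1$, so the decomposition of $V(\theta)$ into its $H_{\mp1}$ pieces is $(V\theta)_{-1}=-i\theta_{-1}$ and $(V\theta)_1 = i\theta_1$. Plugging into the criterion just established, $V(\theta)$ is coclosed iff $\Re\big(\eta_-(i\theta_1)\big)=0$, i.e.\ iff $\Re\big(i\,\eta_-\theta_1\big)=0$, i.e.\ iff $\Im(\eta_-\theta_1)=0$, which is exactly the condition for $\theta$ to be closed. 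The parenthetical remark follows by applying this with $\theta$ replaced by $V(\theta)$ and using $V^2(\theta)=-\theta$.

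The main obstacle here is purely bookkeeping: getting the constants and signs right in the dictionary between $d$, $\delta$ on $M$ and the operators $X$, $\eta_\pm$ on $SM$, and being careful that $\eta_\pm$ are only densely defined so that the ``transpose is $-\eta_\mp$'' statement is what justifies $\overline{\eta_+ g}=-\eta_- \bar g$ on the relevant smooth vectors. Once those normalizations are pinned down — which is standard and already implicit in the $\{X,H,V\}$ formalism recalled above — each of the three equivalences is a one-line consequence, so the lemma is indeed ``straightforward'' as claimed; I would present it in exactly this order (closed criterion, coclosed criterion, then the $V$-duality) so that the third part is a corollary of the first two.
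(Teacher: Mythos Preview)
Your overall strategy is exactly the natural one (and is what the paper has in mind when it calls the lemma ``straightforward''), and your derivation of the third equivalence from the first two is clean and correct. However, the argument for the first two equivalences contains two sign errors that happen to cancel.

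First, the complex-conjugation identity is wrong. The statement ``the transpose of $\eta_+$ is $-\eta_-$'' refers to the $L^2$-adjoint, not to conjugation of functions. Since $X$ and $H$ are \emph{real} vector fields, one has $\overline{\eta_\pm f}=\eta_\mp\bar f$ with \emph{no} minus sign. Hence, using $\overline{\theta_1}=\theta_{-1}$,
\[
\eta_+\theta_{-1}+\eta_-\theta_1=\overline{\eta_-\theta_1}+\eta_-\theta_1=2\,\Re(\eta_-\theta_1),
\qquad
\eta_+\theta_{-1}-\eta_-\theta_1=-2i\,\Im(\eta_-\theta_1),
\]
which is the opposite of what you wrote.

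Second, the dictionary is reversed. The $H_0$-component of $X\theta$ is the fibre average of $(\nabla_y\theta)(y)$, i.e.\ $-\tfrac12\delta\theta$; it is therefore $\delta\theta$, not $d\theta$, that is detected by $\eta_+\theta_{-1}+\eta_-\theta_1$. The exterior derivative is picked up by $H\theta$: since $H\theta(x,y)=(\nabla_{iy}\theta)(y)$, its $H_0$-component is $-\tfrac12\,{*}d\theta$, and in the $\eta_\pm$ calculus this component equals $\tfrac{i}{2}(\eta_+\theta_{-1}-\eta_-\theta_1)=\Im(\eta_-\theta_1)$. With these two corrections in place your computation gives precisely the lemma: $\theta$ closed $\Leftrightarrow \Im(\eta_-\theta_1)=0$ and $\theta$ coclosed $\Leftrightarrow \Re(\eta_-\theta_1)=0$. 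So the proof is salvageable with no new ideas, but as written the justification of both key steps is incorrect; you should replace the ``transpose'' argument by the observation that $X,H$ are real, and verify (rather than assert) which of $X\theta$, $H\theta$ carries $d\theta$ versus $\delta\theta$ in its $H_0$-part.
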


We shall now give a proof of Theorem B for $n=1$ without using the integral Pestov identity and at the same
time we will compute the map $\rho:\mathcal I(\phi)\to H_{1}(M,\re)$. We will suppose that $h=0$ to simplify matters,
but the calculation below can be easily extended.

The first thing to observe is that $\mathcal D(\theta)=0$ for all $\mathcal D\in\mathcal I(\phi)$
iff $\mathcal D(V(\theta))=0$ for all $\mathcal D\in\mathcal I(\phi^h)$. This will allow us to use directly
the information on $\mathcal I(\phi^h)$ from \cite{FF}.

Note that we always have $\mathcal D(\theta)=0$ for $\mathcal D\in \mathcal I_{n}$
for $n\neq 1$.

Let us examine the component of $\theta$ in one of the spaces of the irreducible representation
corresponding to the principal or complementary series (where $\mathcal I_{\mu}$ acts).
Call it $\theta_{\mu}$. The expansion of this component in the basis described
above is
\[\theta_{\mu}=a_{\mu}\eta_{-}f_{\mu}+b_{\mu}\eta_{+}f_{\mu},\]
where
\[a_{\mu}\|\eta_{-}f_{\mu}\|^2=\langle\theta_{-1},\eta_{-}f_{\mu}\rangle=
-\langle \eta_{+}\theta_{-1},f_{\mu}\rangle,\]
\[b_{\mu}\|\eta_{+}f_{\mu}\|^2=\langle\theta_{1},\eta_{+}f_{\mu}\rangle=-\langle \eta_{-}\theta_{1},f_{\mu}\rangle\]
and it is easy to see that $\|\eta_{-}f_{\mu}\|^2=\|\eta_{+}f_{\mu}\|^2=\mu$.

Now, the space $\mathcal I_{\mu}$ is generated by certain distributions $\mathcal D^+_{\mu}$ and $\mathcal D^-_{\mu}$
that Flaminio and Forni compute explicitly in terms of their Fourier coefficients. From \cite[Section 3]{FF}
we see that these distributions have the property
\[\mathcal D^{\pm}_{\mu}(\eta_{-}f_{\mu})=\mathcal D^{\pm}_{\mu}(\eta_{+}f_{\mu}) \]
and call this common non-zero value value $r_{\pm}$.
Then

\begin{equation}
\mathcal D^{\pm}_{\mu}(\theta)=
\frac{-r_{\pm}}{\mu}\langle \eta_{+}\theta_{-1}+\eta_{-}\theta_{1},f_{\mu}\rangle=
\frac{-2r_{\pm}}{\mu}\langle \Re(\eta_{-}\theta_{1}),f_{\mu}\rangle.
\label{accion}
\end{equation}

In the case of the holomorphic discrete series we can do a similar calculation. We can think of each
$f_{1}$ with $\eta_{-}f_{1}=0$ as a holomorphic section of the canonical line bundle over $M$ which gives
rise to an invariant distribution
$\mathcal D^1\in \mathcal I_{1}$ and
\begin{equation}
\mathcal D^1(\theta)=\langle \theta_{1},f_{1}\rangle \mathcal D^1(f_{1})
\label{dis}
\end{equation}
with $\mathcal D^1(f_{1})\neq 0$ and similarly for the anti-holomorphic discrete series.

Suppose now $\G(u)=\theta$ and let us show that $\theta$ must be exact.
Without loss of generality we can assume that $\theta$ is coclosed, since we can always
write $\theta=\theta'+df$, where $\theta'$ is coclosed and $\G(u-f\circ\pi)=\theta'$.
Then $\mathcal D(V(\theta))=0$ for all $\mathcal D\in \mathcal I(\phi^h)$.
Hence by (\ref{accion})
\[\langle \Re(\eta_{-}V(\theta)_{1}),f_{\mu}\rangle=0\]
for all $\mu\in\sigma$ which implies $\Re(\eta_{-}V(\theta)_{1})=0$. By Lemma \ref{util},
$\theta$ must be closed and hence harmonic (since we are assuming it is coclosed) and so is
$V(\theta)$.
Thus $\eta_{-}V(\theta)_{1}=0$ and we have an invariant distribution associated
with $V(\theta)_1$. But using (\ref{dis}) we see that $\theta$ must vanish identically.

This argument also shows the following. Take $[\omega]\in H^{1}(M,\re)$ and represent
the class by a harmonic 1-form $\omega$. Equation (\ref{accion}) shows that
$\mathcal D(\omega)=0$ for any $\mathcal D\in \mathcal I(\phi)$
corresponding to a distribution from $\mathcal I_{\mu}$.
On the other hand by (\ref{dis}) the restriction of $\rho$ to the subspace corresponding
to $\mathcal I_{1}$ will be an isomorphism onto $H_{1}(M,\re)$.

\medskip

We now observe that the results in \cite{FF} also give complete information for higher order tensors.
Let $C^{\infty}_{n}(SM)=H_{n}\cap C^{\infty}(SM)$. Note that if $u\in C^{\infty}_{n}(SM)$,
then $\G(u)\in C^{\infty}_{n+1}(SM)\oplus C^{\infty}_{n}(SM)\oplus C^{\infty}_{n-1}(SM)$.
Set
\[F^{\infty}_{k}(SM):=\bigoplus_{|i|\leq k}C^{\infty}_{i}(SM).\]

\begin{Definition} {\rm The magnetic ray transform is the function $I: C^{\infty}(SM)\to (\mathcal I(\phi))^*$
given by
\[I(v)(\mathcal D)=\mathcal D(v).\]

}
\end{Definition}

\begin{Proposition} For any $k\geq 0$, the kernel of $I$ restricted to $F^{\infty}_{k}(SM)$ is $\G(F_{k-1}^{\infty}(SM))$.
{\rm(}If $k=0$, we interpret this as saying that $I$ is injective.{\rm)}
\label{higher}
\end{Proposition}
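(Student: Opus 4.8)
The plan is to feed the Flaminio--Forni solvability theorem (Theorem~\ref{flafor}, which applies to $\G$ since $\phi$ is conjugate to the horocycle flow via $(x,y)\mapsto(x,iy)$) into the tridiagonal shape of $\G$ with respect to the grading $L^{2}(SM)=\bigoplus_{n\in\Z}H_{n}$.

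One inclusion is formal: $\G$ shifts the vertical degree by at most one, so $\G(C^{\infty}_{i}(SM))\subseteq C^{\infty}_{i-1}(SM)\oplus C^{\infty}_{i}(SM)\oplus C^{\infty}_{i+1}(SM)$ and hence $\G(F^{\infty}_{k-1}(SM))\subseteq F^{\infty}_{k}(SM)$; moreover $\mathcal D(\G w)=0$ for every $\mathcal D\in\mathcal I(\phi)$, so $\G(F^{\infty}_{k-1}(SM))\subseteq\ker(I|_{F^{\infty}_{k}(SM)})$. For the opposite inclusion, take $v\in F^{\infty}_{k}(SM)$ with $I(v)=0$, i.e.\ $\mathcal D(v)=0$ for all $\mathcal D\in\mathcal I(\phi)$. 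By Theorem~\ref{flafor} there is $u\in C^{\infty}(SM)$ with $\G u=v$, and it suffices to show $u\in F^{\infty}_{k-1}(SM)$. Decompose $L^{2}(SM)$ into $PSL(2,\re)$-irreducible subspaces $\mathcal H$; each is $\G$-invariant, so $u=\sum_{\mathcal H}u_{\mathcal H}$ with $u_{\mathcal H}$ a smooth vector, $\G u_{\mathcal H}=v_{\mathcal H}$, and $v_{\mathcal H}$ supported in vertical degrees $|n|\le k$. Fix $\mathcal H$ with its weight basis $\{w_{n}\}$, $\eta_{+}w_{n}=p_{n}w_{n+1}$, $p_{n}:=\|\eta_{+}w_{n}\|$; the $\mathfrak{sl}(2,\re)$ relations give $p_{n}^{2}$ equal to a monic quadratic polynomial in $n$, so $p_{n}=|n|+O(1)$, with $p_{n}>0$ throughout the range used below. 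Writing $u_{\mathcal H}=\sum_{n}\alpha_{n}w_{n}$ — with $(\alpha_{n})$ rapidly decreasing, as $\sum_{n}|n|^{2N}|\alpha_{n}|^{2}\le\|V^{N}u\|^{2}<\infty$ for all $N$ — and using $\G=\tfrac12(\eta_{+}+\eta_{-})+V$, $\eta_{+}^{*}=-\eta_{-}$, and $-iV=n$ on $H_{n}$, the equation $\G u_{\mathcal H}=v_{\mathcal H}$ becomes, for every $m$ with $|m|>k$, the homogeneous three-term recurrence
\[p_{m}\,\alpha_{m+1}=p_{m-1}\,\alpha_{m-1}+2im\,\alpha_{m}.\]

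The heart of the matter is to prove this recurrence has no nonzero rapidly decreasing solution; this gives $\alpha_{m}=0$ for all $|m|\ge k$, hence $u_{\mathcal H}\in\bigoplus_{|n|\le k-1}(\mathcal H\cap H_{n})$ for every $\mathcal H$, hence $u\in F^{\infty}_{k-1}(SM)$ and $v=\G u\in\G(F^{\infty}_{k-1}(SM))$. I would analyze the recurrence by substituting $\alpha_{m}=i^{m}\beta_{m}$: using $p_{m}=m+\tfrac12+O(1/m)$ it becomes, to the orders that matter, $\beta_{m+1}-2\beta_{m}+\beta_{m-1}=-\tfrac1m(\beta_{m}-\beta_{m-1})$, so the first difference $\gamma_{m}:=\beta_{m}-\beta_{m-1}$ satisfies $\gamma_{m+1}=(1-\tfrac1m)\gamma_{m}$ and thus $\gamma_{m}=O(1/m)$, $\beta_{m}=O(\log m)$; consequently $|\alpha_{m}|$ either grows or stays bounded below by a fixed negative power of $m$, so it cannot decay faster than every polynomial unless $(\alpha_{m})\equiv0$. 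The two tails $m\to\pm\infty$ are handled the same way, and a discrete-series component whose lowest weight exceeds $k$ satisfies $v_{\mathcal H}=0$, hence $u_{\mathcal H}=0$ by unique ergodicity of $\G$. For $k=0$ one has $F^{\infty}_{-1}(SM)=\{0\}$ and the statement is injectivity of $I$ on $F^{\infty}_{0}(SM)$: the argument forces $u_{\mathcal H}=0$ for every nontrivial $\mathcal H$, so $u$ is constant and $v=\G u=0$ (alternatively, invoke Theorem~A).

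The genuinely delicate point — and where I would put the effort — is this last recurrence estimate: the leading-order characteristic root has modulus one, so one must control the $O(1/m)$ corrections (which originate in $p_{n}=|n|+O(1)$) precisely enough to rule out a solution decaying faster than every polynomial. Everything else is bookkeeping on top of Theorem~\ref{flafor}; alternatively, the same conclusion can be extracted from Flaminio and Forni's explicit description of the invariant distributions, along the lines of the computations leading to (\ref{accion})--(\ref{dis}).
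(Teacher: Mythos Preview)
Your overall strategy matches the paper's: invoke Theorem~\ref{flafor} to produce a smooth $u$ with $\G u=v$, decompose into irreducible $PSL(2,\re)$-components, and verify in each that $u$ has no Fourier modes with $|n|\ge k$. The divergence is in how this last step is carried out.

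The paper's argument is much shorter. After passing to the horocycle flow $U$ via the conjugacy $(x,y)\mapsto(x,iy)$, it simply quotes the explicit closed-form expressions for the Fourier coefficients $u_{n}$ in terms of the $v_{l}$ that Flaminio and Forni derived in \cite[Section~4]{FF}; for the principal series (with $\nu\neq 0$) these read
\[
u_{n}=\frac{2i}{\nu}\sum_{l<n}\Bigl(\frac{\Pi_{\nu,|l|}}{\Pi_{\nu,|n|}}-1\Bigr)v_{l}
      =\frac{-2i}{\nu}\sum_{l\ge n}\Bigl(\frac{\Pi_{\nu,|l|}}{\Pi_{\nu,|n|}}-1\Bigr)v_{l},
\]
and analogous formulas hold for the other series. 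From these one reads off directly that $v_{l}=0$ for $|l|>k$ forces $u_{n}=0$ for all $|n|\ge k$ (use the first sum for $n\le -k$ and the second for $n\ge k$; the boundary term $l=n$ has vanishing coefficient). No asymptotic analysis of a recurrence is needed, because \cite{FF} has already solved the recurrence.

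Your route---analyzing the three-term recurrence from scratch---is more self-contained but has a genuine gap precisely where you flag it. The leading-order substitution $\alpha_{m}=i^{m}\beta_{m}$ yields a recurrence whose characteristic root is a double root at $1$; your heuristic produces $\beta_{m}=O(\log m)$, but that is an \emph{upper} bound on a generic solution, whereas what you need to exclude a rapidly decreasing $(\alpha_{m})$ is a \emph{lower} bound valid for both fundamental solutions. Because of the double root, this lower bound is sensitive to the exact subleading terms in $p_{m}$ (which depend on the Casimir parameter of the representation), and your sketch does not pin them down. Making the argument rigorous would essentially amount to redoing the explicit-solution analysis of \cite[Section~4]{FF}; since that work is already available, the paper's approach of citing it is both cleaner and complete.
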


\begin{proof} Take $v$ in the kernel of $I$. We know by Theorem \ref{flafor} that there exists a smooth $u$
(unique up to addition of a constant) such that $\G(u)=v$. We must show that $u\in F_{k-1}^{\infty}(SM)$.

Using the conjugacy between $\phi$ and $\phi^h$ it is easy to see that it suffices to show this claim
for the classical horocycle flow $U$. Indeed, note that $\mathcal D(v)=0$ for all $\mathcal D\in\mathcal I(\phi)$
iff $\mathcal D((x,y)\mapsto v(x,iy))=0$ for all $\mathcal D\in\mathcal I(\phi^h)$.

But in \cite[Section 4]{FF}, there are explicit formulas for the Fourier coefficients of $u$ in terms of those
of $v$ for each of the possible representations, where $U(u)=v$.
 For example, for the principal series (and $\nu\neq 0$) one obtains:
\[u_{n}=\frac{2i}{\nu}\sum_{l<n}\left(\frac{\Pi_{\nu,|l|}}{\Pi_{\nu,|n|}}-1\right)v_{l}=
\frac{-2i}{\nu}\sum_{l\geq n}\left(\frac{\Pi_{\nu,|l|}}{\Pi_{\nu,|n|}}-1\right)v_{l},\]
where $\nu$ is related to an eigenvalue $\mu$ of the Laplacian by $1-\nu^2=4\mu$, and $\Pi_{\nu,l}$
are certain coefficients defined in \cite[Section 2]{FF} whose precise value is of no importance to us.
Now, if $v\in F^{\infty}_{k}(SM)$, then $v_{l}=0$ for all $l$ with $|l|>k$. Thus the formula
above implies that $u_{n}=0$ for all $n$ with $|n|\geq k$.
A similar argument can be done for all other representations using the formulas in \cite[Section 4]{FF}.
In all the cases we see that $v\in F^{\infty}_{k}(SM)$ implies $u_{n}=0$ for all $n$ with $|n|\geq k$, and thus
$u\in  F_{k-1}^{\infty}(SM)$ as desired.

\end{proof}

Of course, for $k=1$ the proposition is saying exactly the same as Theorem B for $n=1$.






\section{Proof of Theorem B}

Given an arbitrary pair $(g,\Omega)$ on a closed manifold $M$, 
formula (\ref{pestov-integral-final}) takes the simpler form:

\begin{multline}\label{pestov-integral-finalr}
\int_{SM}\big\{|\mathbf X (\nabla^{\cdot} u)|^2
-\langle \mathbf R_y(\nabla^{\cdot} u),\nabla^{\cdot} u\rangle
-\langle\nabla^{\cdot } (\mathbf X u),Y(\nabla^{\cdot} u)\rangle
-2\langle Y(y),\nabla^{\cdot}u\rangle^2
\\+\langle \nabla^{:} u,Y(\nabla^{\cdot}u)\rangle
+\langle\nabla_{(\nabla^{\cdot}u)}Y(y),\nabla^{\cdot}u\rangle\big\}\,d\mu
=\int_{SM}\big\{|\nabla^{\cdot} (\mathbf X  u)|^2
-n(\mathbf X u)^2\big\}\,d\mu,
\end{multline}
where now all the derivatives that appear are obtained using the
Levi-Civita connection of the metric.

Suppose now that $\G(u)=h\circ\pi+\theta$ and extend $u$
to a positively homogeneous function of degree zero on
$TM\setminus\{0\}$ (still denoted by $u$).  Then $\mathbf X(u)=|y|\,
h\circ\pi+\theta$ and Lemma \ref{int-nabla} shows 
as in the proof of Theorem A that the right hand side of
(\ref{pestov-integral-finalr}) is non-positive and thus

\begin{multline}\label{lessz0}
\int_{SM}\big\{|\mathbf X (\nabla^{\cdot} u)|^2
-\langle \mathbf R_y(\nabla^{\cdot} u),\nabla^{\cdot} u\rangle
-\langle\nabla^{\cdot } (\mathbf X u),Y(\nabla^{\cdot} u)\rangle
-2\langle Y(y),\nabla^{\cdot}u\rangle^2
\\+\langle \nabla^{:} u,Y(\nabla^{\cdot}u)\rangle
+\langle\nabla_{(\nabla^{\cdot}u)}Y(y),\nabla^{\cdot}u\rangle\big\}\,d\mu
\leq 0.
\end{multline}

We also know that (\cite[Proof of Lemma 4.7]{DP2}:

\[\mathbf X(\nabla^{\cdot} u)=\nabla^{\cdot}(\mathbf X u)-\nabla^{:}u-\langle Y(y),\nabla^{\cdot} u\rangle y.\]
Therefore
\[-\langle\nabla^{\cdot } (\mathbf X u),Y(\nabla^{\cdot} u)\rangle
+\langle \nabla^{:} u,Y(\nabla^{\cdot}u)\rangle=-\langle \mathbf X(\nabla^{\cdot} u),Y(\nabla^{\cdot} u)\rangle
+\langle Y(y),\nabla^{\cdot} u\rangle^{2}.\]
Hence (\ref{lessz0}) gives:

\begin{multline}\label{lessz1}
\int_{SM}\big\{|\mathbf X (\nabla^{\cdot} u)|^2
-\langle \mathbf R_y(\nabla^{\cdot} u),\nabla^{\cdot} u\rangle
-\langle \mathbf X(\nabla^{\cdot } u),Y(\nabla^{\cdot} u)\rangle\\
-\langle Y(y),\nabla^{\cdot}u\rangle^2
+\langle\nabla_{(\nabla^{\cdot}u)}Y(y),\nabla^{\cdot}u\rangle\big\}\,d\mu
\leq 0.
\end{multline}

Let $M$ be as is the Theorem. Then $Y=\mathbb J$, $\nabla\mathbb J=0$, and the curvature tensor
is given by (see \cite[p. 166]{KN}):
\[-4\,R(X,Y,Z,W)=\langle X,Z\rangle\langle Y,W\rangle-\langle X,W\rangle
\langle Y,Z\rangle+\langle X,\mathbb J Z\rangle\langle Y,\mathbb J W\rangle\]
\[-\langle X,\mathbb J W\rangle\langle Y,\mathbb J Z\rangle+
2\langle X,\mathbb J Y\rangle\langle Z,\mathbb J W\rangle.\]
Let $Z:=\nabla^{\cdot} u$. Then $\langle Z,y\rangle=0$ and
\[R(y,Z,y,Z)=-\frac{1}{4}\big(|Z|^2+3\langle Z,\mathbb J y\rangle^{2}\big).\]

Using this information in (\ref{lessz1}) we obtain:

\begin{equation}\label{lessz2}
\int_{SM}\big\{|\mathbf X (Z)|^2
+\frac{1}{4}\big(|Z|^2-\langle Z,\mathbb J y\rangle^{2}\big)
+\langle \mathbb J(\mathbf X(Z)),Z\rangle\big\}\,d\mu
\leq 0.
\end{equation}

Write $Z=a\mathbb J y+Z_{0}$, where $\langle Z_{0},\mathbb J\rangle=0$.
Then
\[\mathbf X(Z)=\G(Z)=\G(a)\mathbb J y-ay+\G(Z_{0}),\]
\[\mathbb J(\mathbf X(Z))=-\G(a)y-a\mathbb J y+\mathbb J\G(Z).\]
Since 
\[\langle Z_{0},y\rangle=\langle Z_{0},\mathbb J y\rangle=0\]
we have
\[\langle \G(Z_{0}),y\rangle=\langle \G(Z_{0}),\mathbb J y\rangle=0.\]
Therefore
\[|\mathbf X(Z)|^2=[\G(a)]^2+a^2+|\G(Z_{0})|^2,\]
\[\langle \mathbb J(\mathbf X(Z)),Z\rangle=-a^2+\langle \mathbb J\G(Z_{0}),Z_{0}\rangle.\]
Hence (\ref{lessz2}) can be rewritten as follows
\[\int_{SM}\big\{[\G(a)]^2+|\G(Z_{0})|^2+\frac{1}{4}|Z_{0}|^{2}
+\langle \mathbb J\G(Z_{0}),Z_{0}\rangle\big\}\,d\mu\leq 0.\]
Equivalently
\begin{equation}\label{lessz3}
\int_{SM}\big\{[\G(a)]^2+|Z_{0}/2+\mathbb J\G(Z_{0})|^2\big\}\,d\mu\leq 0.
\end{equation}
This inequality can hold only if
\[\G(a)=0\;\;\;\mbox{\rm and}\;\;\;Z_{0}/2=-\mathbb J\G(Z_{0}).\]
The last equation implies
\[\G(|Z_{0}|^2)=2\langle \G(Z_{0}),Z_{0}\rangle=\langle \mathbb J(Z_{0}/2),Z_{0}\rangle=0.\]
Therefore $a$ and $|Z_{0}|^2$ are first integrals of $\G$. Since the magnetic flow
$\phi^1$ is ergodic with respect to the Liouville measure (cf. Lemma \ref{ergo})
we conclude that $|Z|$ is constant everywhere.
Let us show that this implies that $Z=\nabla^{\cdot} u =0$ everywhere.

Recall that $\nabla^{\cdot}u:=(u^{\cdot i})$ where $u^{\cdot i}:=g^{ij}u_{\cdot j}$
and $u_{\cdot j}:=\frac{\partial u}{\partial y^{j}}$.
Fix $x_{0}\in M$ and consider the restriction $\tilde{u}$ of $u$ to $S_{x}M$. Since $S_{x}M$ is compact
there is $y_{0}\in S_{x}M$ for which $d_{y_{0}}\tilde{u}=0$. Since $u$ is homogeneous of degree zero
we must have $\nabla^{\cdot}u(x_{0},y_{0})=0$. Thus $\nabla^{\cdot}u=0$ everywhere in $SM$.

\qed

\section{Proof of Theorem C}

The first thing to observe is that the existence of Green subbundles implies that $\tau(\Sigma)=M$
(see \cite[Corollary 1.13]{CI}). Since $H$ is convex and superlinear, for each $x\in M$, there
exists a unique $\beta_x\in T^*_{x}M$ such that $p\mapsto H(x,p)$ achives it unique minimum
at $\beta_x$.

The map $x\mapsto\beta_{x}$ can be seen as a smooth 1-form and $\beta_x$ belongs to the interior
of the region bounded by $\Sigma\cap T_{x}^*M$ for all $x\in M$.

Consider the map $B:T^*M\to T^*M$ given by $B(x,p)=(x,p-\beta_{x})$. It is easy to check that
$B^*(\la)=\la-\tau^*\beta$ and that $B^*(\tau^*\Omega)=\tau^*\Omega$.
Hence if we let $\widetilde{\Omega}:=\Omega+d\beta$, $B$ is
a symplectomorphism between $(T^*M,-d\la+\tau^*\Omega)$ and $(T^*M,-d\la+\tau^*\widetilde{\Omega})$.
The Hamiltonian $H_{\beta}(x,p):=H(x,p+\beta_{x})=H\circ B^{-1}$ achieves its minimum on every fibre
at the zero section and thus, without loss of generality, we may assume that $\Sigma$ contains the zero section of
$T^*M$. But in that case we can define a Finsler metric $F$ on $M$
using homogeneity and declaring that $\Sigma$ corresponds to the unit cosphere bundle of $F$.

Let $\phi^F$ be the magnetic flow of $(F,\Omega)$ acting on $\Sigma$ with infinitesimal generator
$X_{F}$. By definition of $F$, there exists a positive function $f\in C^{\infty}(\Sigma)$ such
that $X_{H}=fX_{F}$. Thus $X_{H}(u)=\tau^*\theta(X_{H})$ if and only if
$X_{F}(u)=\tau^*\theta(X_{F})$. On account of Theorem A, the proof of Theorem C will be complete
once we prove that the magnetic flow $\phi^F$ is also free of conjugate points on $\Sigma$. 

Since $X_{H}=fX_{F}$, there exists a smooth function $s_{t}(x,p)$ such that
\[\phi^F_{t}(x,p)=\phi_{s_{t}(x,p)}(x,p).\]
Differentiating with respect to $(x,p)$ we obtain
\[d\phi^F_{t}=d\phi_{s_{t}}+(X_{H}\circ\phi_{s_{t}})\,ds_{t}.\]
Let $E\subset T\Sigma$ be the stable Green subbundle for $\phi$ (recall that we are assuming
that $\phi$ has no conjugate points). Since $X_{H}\in E$, the formula above shows
that $E$ is also $d\phi^F$-invariant. Since $E$ is a Lagrangian subbundle which 
intersects the vertical subspace trivially, it follows from \cite[Proposition 1.15]{CI}
that $\phi^F$ is also free of conjugate points as desired.

\qed

\section{Appendix}

In this appendix we collect some facts about magnetic flows on compact quotients
of complex hyperbolic space.

\subsection{Complex hyperbolic space}

Our reference for this subsection is \cite{G}.

Let $\C^{n,1}$ be the $(n+1)$-dimensional complex vector space consisting of
$(n+1)$-tuples

\[ z = \left[ \begin{array}{c}
z' \\
z_{n+1}
\end{array} \right]\in \C^{n+1}\] 
with Hermitian pairing
\[\langle z,w\rangle=z_{1}\bar{w}_{1}+\dots+z_{n}\bar{w}_{n}-z_{n+1}\bar{w}_{n+1}.\]
We denote the group of unitary automorphisms of $\C^{n,1}$ by $U(n,1)$. For any
unit complex number $\rho$, scalar multiplication by $\rho$ lies in $U(n,1)$;
the corresponding subgroup is the center of $U(n,1)$.

Complex hyperbolic space $n$-space $\H^n$ is defined to be the subset of
${\mathbb P}(\C^{n,1})$ consisting of negative lines in $\C^{n,1}$.
A vector $z$ is said to be negative (resp. null, positive) if $\langle z,\rangle$ 
is negative (resp. null, positive).

The boundary $\partial \H^n$ of $\H^n$ is the set of null lines in $\C^{n,1}$.

Let $PU(n,1)$ be the image of $U(n,1)$ in $PGL(\C^{n,1})$. The group $PU(n,1)$
is the full group of biholomorphisms of $\H^n$.

Consider $\C^n$ with the standard positive definite Hermitian inner product
\[\langle\langle z,w\rangle\rangle=z_{1}\bar{w}_{1}+\dots+z_{n}\bar{w}_{n}.\]
Complex hypebolic space can be identified with the unit ball $\mathbb B^n$
in $\C^n$ by considering the restriction to $\mathbb B^n$ of the map
that takes $z'\in \C^n$ to the line determined by
\[\left[ \begin{array}{c}
z' \\
1
\end{array} \right]\in \C^{n,1}.\] 
The map also identifies $\partial {\mathbb B}^n=S^{2n-1}$ with $\partial \H^n$.

Complex hyperbolic space can be naturally endowed with a $PU(n,1)$-invariant
K\"ahler structure which we normalize so that it has holomorphic sectional curvature
equal to $-1$ (hence the sectional curvatures range in the interval $[-1,-1/4]$).

The group $PU(n,1)$ acts transitively on $\H^n$ and on the unit sphere bundle
$S\H^n$. Let $O$ be the point in $\H^n$ determined by the line
\[\left[ \begin{array}{c}
0 \\
1
\end{array} \right]\in \C^{n,1}\] 
which corresponds to the origin in $\mathbb B^n$. The stabilizer of $O$ is
given by $U(n)$ which sits in $PU(n,1)$ as follows
\[U(n)\mapsto U(n,1)\mapsto PU(n,1)\]
where the first map is
\[A\mapsto \left[ \begin{array}{cc}
A& 0 \\
0& 1
\end{array} \right]\]
and the second map is just projection. ($U(n)$ is the maximal compact subgroup
of $PU(n,1)$.) Thus $\H^n$ is the rank one symmetric space
\[PU(n,1)/U(n).\]

The Lie algebra of $PU(n,1)$ equals $su(n,1)$ and consists of matrices of the form
\[M(X,\xi):=\left[ \begin{array}{cc}
X& \xi \\
\xi^*& -\mbox{tr}(X)
\end{array} \right]\]
where $X\in u(n)$ satisfies $X^*=-X$ and $\xi\in \C^n$. (For any matrix $Y$ we denote
its conjugate transpose by $Y^*$.)
The embedding $u(n)\mapsto su(n,1)$ corresponding to $U(n)\mapsto PU(n,1)$ is given by
\[X\mapsto \left[ \begin{array}{cc}
X-\frac{1}{n+1}\mbox{tr}(X){\mathbb I}_{n}& 0 \\
0& -\frac{1}{n+1}\mbox{tr}(X)
\end{array} \right].\]
It is easily seen that $T_{O}\H^n$ is given by the matrices of the
form $M(0,\xi)$ and the metric of the K\"ahler structure is just
\[\langle M(0,\xi),M(0,\eta)\rangle=4\langle\langle \xi,\eta\rangle\rangle.\]
(The factor of $4$ makes the holomorphic sectional curvature equal to $-1$.)
One can also check that the almost complex structure
$\mathbb J_{O}:T_{O}\H^n\mapsto T_{O}\H^n$ is given by
\[\xi\mapsto i\xi.\]
The subgroup $U(n)$ acts on $T_{O}\H^n$ simply by
\[\xi\mapsto A\xi.\]
Hence if we set
\[\xi_{0} = \left[ \begin{array}{c}
0 \\
1/2
\end{array} \right]\in \C^n,\] 
then $M(0,\xi_{0})$ has norm one and the subgroup of $U(n)$ that
stabilizes $\xi_{0}$ is $U(n-1)$ embedded as
\[A\mapsto \left[ \begin{array}{cc}
A& 0 \\
0& 1
\end{array} \right].\]
Thus
\[S\H^n=PU(n,1)/U(n-1).\]

The embedding $u(n-1)\mapsto su(n,1)$ is given by
\[X\mapsto \left[ \begin{array}{ccc}
X-\frac{1}{n+1}\mbox{tr}(X){\mathbb I}_{n-1}& 0&0 \\
0& -\frac{1}{n+1}\mbox{tr}(X)&0\\
0&0&-\frac{1}{n+1}\mbox{tr}(X)
\end{array} \right].\]
Set
\[N(\eta,\xi,t):=\left[ \begin{array}{ccc}
{\mathbb O}_{n-1}& \eta&\xi' \\
-\eta^*& it&\xi_{n}\\
(\xi')^*&\xi_{n}& -it
\end{array} \right]\]
where $t\in\re$, $\eta\in \C^{n-1}$ and
\[\xi= \left[ \begin{array}{c}
\xi' \\
\xi_{n}
\end{array} \right]\in \C^n.\]
The tangent space $T_{(O,\xi_{0})}S\H^n$ is then given by the set of all matrices
$N(\eta,\xi,t)$ as above. 
If we let $\pi:S\H^n\to\H^n$ be the canonical projection, then
its differential $d\pi_{(O,\xi_{0})}S\H^n\to T_{O}\H^n$ is given by
\[N(\eta,\xi,t)\mapsto M(0,\xi).\]
From this we can easily see that the vertical subspace $\mathcal V$ at
$(O,\xi_{0})$ is just the set of all matrices of the form
$N(\eta,0,t)$. The horizonal subspace $\mathcal H$ is given by
the matrices of the form $N(0,\xi,0)$.

\subsection{Geodesic and magnetic flows}

Let 
\[X:=N(0,\xi_{0},0)=\left[ \begin{array}{ccc}
{\mathbb O}_{n-1}&0&0 \\
0& 0&1/2\\
0&1/2& 0
\end{array} \right].\]
The one-parameter subgroup generated by $X$ is precisely the geodesic flow
of $\H^n$. That is, if we let $\phi_{t}:S\H^n=PU(n,1)/U(n-1)\to S\H^n$ be
\[\phi_{t}(gU(n-1))=ge^{tX}U(n-1)\]
then $\phi$ is the geodesic flow of $\H^n$. This follows from the fact that $X$ is horizontal and
its horizontal component is precisely $\xi_{0}$.
Note that
\[e^{tX}=\left[ \begin{array}{ccc}
{\mathbb I}_{n-1}&0&0 \\
0& \cosh (t/2)& \sinh(t/2)\\
0&\sinh(t/2)&\cosh(t/2)
\end{array} \right]\]
which commutes with $U(n-1)$.

Next observe that the vertical vector, whose vertical component is
$i\xi_{0}=\mathbb J_{O}(\xi_{0})$ is given by the matrix
\[V:=N(0,0,1/2)=\left[ \begin{array}{ccc}
{\mathbb O}_{n-1}&0&0 \\
0& i/2&0\\
0&0& -i/2
\end{array} \right].\]

It follows that
\[X_{\la}:=X+\la V=\left[ \begin{array}{ccc}
{\mathbb O}_{n-1}&0&0 \\
0& \la i/2&1/2\\
0&1/2& -\la i/2
\end{array} \right]\]
generates the magnetic flow $\phi^{\la}$ of the pair $(g,\la\Omega)$ where
$g$ is the Riemannian metric on $\H^n$ and $\Omega$ is the
K\"ahler form.
Note that $X_1$ gives rise to a {\it unipotent} flow.

\subsection{Smooth compact quotients}

A classical result of A. Borel \cite{Bo} asserts that there are always
cocompact lattices $\Gamma\subset PU(n,1)$ such that
$M:=\Gamma\setminus  \H^n$ is a smooth compact manifold (a locally symmetric space).
The magnetic flows $\phi^\la$ descend to
\[SM=\Gamma\backslash PU(n,1)/U(n-1)\]
and we still denote them by $\phi^\la$.

The following lemma should be well known to experts. We include its proof for completeness.

\begin{Lemma} For $|\la |<1$, the flow $\phi^\la$ is, up to a constant time change,
smoothly conjugate to $\phi^0$ with entropy $\sqrt{1-\la^2}\,h(\phi^0)$. The flow
$\phi^1$ has zero entropy and is ergodic {\rm(}in fact mixing{\rm)} with respect to the Liouville measure.
\label{ergo}
\end{Lemma}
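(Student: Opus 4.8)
The statement is really two separate assertions, both standard facts about homogeneous flows on $\Gamma\backslash PU(n,1)$, and I would prove them by exhibiting explicit elements of the Lie algebra and invoking the dictionary between one-parameter subgroups and flows. First I would handle the case $|\lambda|<1$. The key observation is that $X_\lambda = X + \lambda V$ is a matrix whose nonzero block is $\begin{pmatrix} \lambda i/2 & 1/2 \\ 1/2 & -\lambda i/2\end{pmatrix}$, and a direct computation of its eigenvalues gives $\pm\frac12\sqrt{1-\lambda^2}$ (real and nonzero precisely when $|\lambda|<1$). Thus $X_\lambda$ is conjugate \emph{within} $su(n,1)$ — in fact within the relevant $su(1,1)$ block, by an element of the group — to $\sqrt{1-\lambda^2}\,X$, possibly after also absorbing the compact part. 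Concretely I would produce $g_\lambda \in PU(n,1)$ with $\mathrm{Ad}(g_\lambda) X_\lambda = \sqrt{1-\lambda^2}\, X$; since $g_\lambda$ normalizes (or can be chosen to normalize) $U(n-1)$, left translation by $g_\lambda$ on $\Gamma\backslash PU(n,1)/U(n-1)$ — or rather on the double coset space, being slightly careful since conjugation on the left does not obviously descend — gives the smooth conjugacy between $\phi^\lambda$ and a constant time reparametrization of $\phi^0$ by the factor $\sqrt{1-\lambda^2}$. Entropy scales linearly under constant time change, which yields $h(\phi^\lambda) = \sqrt{1-\lambda^2}\, h(\phi^0)$.

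For the case $\lambda = 1$, the matrix $X_1$ has nonzero block $\begin{pmatrix} i/2 & 1/2 \\ 1/2 & -i/2\end{pmatrix}$, which is nilpotent (its square is zero), so $e^{tX_1}$ is a one-parameter \emph{unipotent} subgroup, as already noted in the excerpt. Zero entropy is then immediate: unipotent flows on homogeneous spaces have zero topological entropy (polynomial orbit growth), or alternatively one notes the flow is distal / has polynomially bounded differential. For ergodicity and mixing I would invoke the Moore ergodicity theorem: a one-parameter subgroup $\{e^{tX_1}\}$ of the semisimple Lie group $G = PU(n,1)$ acts ergodically (indeed mixingly) on $\Gamma\backslash G$ for any lattice $\Gamma$ provided the subgroup is noncompact — which it is, being unipotent and nontrivial. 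Mixing on $\Gamma\backslash G$ passes to the quotient $\Gamma\backslash G/U(n-1) = SM$ because the Liouville measure on $SM$ is the pushforward of Haar measure and $U(n-1)$ acts on the right commuting with the left $e^{tX_1}$-action; an $e^{tX_1}$-invariant $L^2$ function on $SM$ pulls back to an $e^{tX_1}$-invariant function on $\Gamma\backslash G$, forcing it to be constant.

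The main obstacle I anticipate is purely bookkeeping rather than conceptual: making the conjugacy in the $|\lambda|<1$ case genuinely descend to the \emph{double} coset space $\Gamma\backslash G/U(n-1)$. Left translation by $g_\lambda$ certainly descends to $\Gamma\backslash G$ only if $g_\lambda$ normalizes $\Gamma$, which it need not; the correct statement is that the geodesic and magnetic flows live naturally on $\Gamma\backslash G$ acting on the \emph{right}, so one should realize $\phi^\lambda_t$ as right translation $\Gamma g \mapsto \Gamma g\, e^{tX_\lambda}$ (using $S\widetilde M = G/U(n-1)$ with $U(n-1)$ on the right, but the flow coming from the $\mathbb{A}$-direction which normalizes things appropriately), and then conjugate the \emph{flow parameter} subgroup by $g_\lambda$ acting on the right. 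I would therefore set things up so that $\phi^\lambda$ is right multiplication by $e^{tX_\lambda}$ on $\Gamma\backslash PU(n,1)$, push down to $SM$, and observe that $\mathrm{Ad}(g_\lambda)X_\lambda = \sqrt{1-\lambda^2}X$ together with $g_\lambda \in U(n)$-type element (so that it descends to $S\widetilde M \to S\widetilde M$ commuting with the $\Gamma$-action on the left) gives the required smooth conjugacy. The rest — eigenvalue computation, nilpotency check, citing Moore — is routine.
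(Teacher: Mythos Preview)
Your approach is correct and essentially matches the paper's. For $|\lambda|<1$ the paper does exactly what you propose: it produces $c\in PU(n,1)$ with $c^{-1}X_\lambda c=\sqrt{1-\lambda^2}\,X$, and resolves your descent concern cleanly by choosing $c$ of the block form
\[
c=\begin{pmatrix}\mathbb I_{n-1}&0&0\\0&*&*\\0&*&*\end{pmatrix},
\]
which commutes with $U(n-1)$, so that the right-translation $g\mapsto gc$ descends directly to $SM=\Gamma\backslash PU(n,1)/U(n-1)$. For $\lambda=1$ the paper likewise invokes Moore's theorem for ergodicity; the only genuine difference is the zero-entropy argument, where the paper uses the explicit commutation relation $\phi^0_t\circ\phi^h_s=\phi^h_{se^{-t}}\circ\phi^0_t$ (so $\phi^h_s$ is smoothly conjugate to $\phi^h_{se^{-t}}$ for every $t$, forcing the entropy to vanish) rather than your appeal to polynomial orbit growth for unipotent flows. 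Both routes are standard and equally valid.
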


\begin{proof} If $|\la |<1$ one can check that there exists $c\in PU(n,1)$ such that
\[c^{-1} X_{\la} c= \sqrt{1-\la^2}\, X_{0}.\]
Moreover, we can choose $c$ of the form
\[c=\left[ \begin{array}{ccc}
{\mathbb I}_{n-1}&0&0 \\
0& *& *\\
0& *& *
\end{array} \right]\]
which commutes with $U(n-1)$. Hence the map $g\mapsto gc$ descends to $SM$ to give a conjugacy
between $\phi^\la$ and a constant time change of $\phi^0$ with scaling factor $\sqrt{1-\la^2}$.

The fact that $\phi^1$ has zero entropy follows easily from $\phi^0_{t}\circ \phi^{h}_{s}=
\phi^{h}_{se^{-t}}\circ\phi^{0}_{t}$ for all $s,t\in\re$, where $\phi^h$ is the flow generated
by
\[U:=\left[\begin{array}{ccc}
{\mathbb O}_{n-1}&0&0 \\
0& -i/2&-i/2\\
0&i/2& i/2
\end{array} \right].\]
But $\phi^1$ and $\phi^h$ are conjugated because there exists $d\in PU(n,1)$
such that $d^{-1} X_{1} d=U$. In fact
\[d=e^{\frac{\pi}{2}V}=\left[ \begin{array}{ccc}
{\mathbb I}_{n-1}&0&0 \\
0& e^{i\pi/4}& 0\\
0& 0& e^{-i\pi/4}
\end{array} \right].\]
Note that $U$ spans the center of the Heisenberg Lie algebra
giving the stable bundle of $\phi^0$. When $n=1$, $\phi^h$ is the classical horocycle flow.

The ergodicity of $\phi^1$ follows immediately from
Moore's ergodicity theorem \cite{moore}.

\end{proof}

\begin{Remark}{\rm Ratner's theorem \cite{R}
describes completely all the ergodic invariant probability measures of
$\phi^1$ (they are all algebraic).

The explicit form of $d$ tells us that
the conjugacy $f:SM\to SM$ between $\phi^1$ and $\phi^h$ is given by:
\[f(x,y)=(x,\mathbb J y)\]
where $\mathbb J$ is the almost complex structure.}
\end{Remark}

Using what we just proved it follows easily that $\phi^1$ has no conjugate points. Indeed, since
$\phi^\la$ is Anosov for $|\la|<1$, it does not have conjugate points. But the no conjugate points
condition is closed, thus $\phi^1$ also has no conjugate points. Of course, one can also give a direct proof
of this fact using the known expression of the Riemann curvature tensor solving explicitly the magnetic Jacobi
equation.

\end{document}